\documentclass[letterpaper,12pt]{amsart}
\textwidth=16.00cm 
\textheight=22.00cm 
\topmargin=0.00cm
\oddsidemargin=0.00cm 
\evensidemargin=0.00cm 
\headheight=0cm 
\headsep=0.5cm

\title{Polytopes from Subgraph Statistics}

\author{Alexander Engstr\"om}
\address{Department of Mathematics \\
University of California, Berkeley, CA 94720}
\email{alex@math.berkeley.edu}
\author{Patrik Nor\'en}
\address{KTH -- The Royal Institute of Technology, Stockholm, Sweden}
\email{pnore@math.kth.se}

\date{\today}

\textheight=630pt

\usepackage{latexsym,array,delarray,amsthm,amssymb,epsfig}

\theoremstyle{plain}
\newtheorem{theorem}{Theorem}[section]
\newtheorem{lemma}[theorem]{Lemma}
\newtheorem{proposition}[theorem]{Proposition}
\newtheorem{corollary}[theorem]{Corollary}
\newtheorem{conjecture}[theorem]{Conjecture}

\theoremstyle{definition}
\newtheorem{definition}[theorem]{Definition}

\newtheorem{example}[theorem]{Example}

\theoremstyle{remark}

\begin{document}
\maketitle

\begin{abstract}
Polytopes from subgraph statistics are important in applications and conjectures and theorems in extremal graph theory can be stated as properties of them. We have studied them with a view towards applications by inscribing large explicit polytopes and semi-algebraic sets when the facet descriptions are intractable. The semi-algebraic sets called curvy zonotopes are introduced and studied using graph limits. From both volume calculations and algebraic descriptions we find several interesting conjectures.
\end{abstract}

\section{Introduction}\label{sec:Introduction}

In this paper we study polytopes from subgraph statistics. The vertices of these polytopes are given by the relative proportions of subgraphs of different types.
We got interested in studying the polytopes from subgraph statistics after several questions were raised about them by Rinaldo, Fienberg and Zhou \cite{rinaldoFienbergZhou}. They investigated maximum likelihood estimation for exponential random graph models  and realized that its behavior is closely linked to the geometry of these polytopes. The subgraphs counted are usually determined by the applications of the model, and in the social sciences small graphs as stars and triangles are common \cite{social}, but we make no restrictions of that type.

Our results and methods are from graph theory and discrete geometry, but we have made an effort to address directions that are important for applications. For example to understand when the polytopes have the expected dimensions and how to approximate them with similar polytopes or semi-algebraic sets with easy explicit descriptions when their facet structures are not attainable.

On the track to these descriptions we have found several, for us, unexpected results and conjectures about both enumerative and geometric combinatorics.

For the experts who wants to skip ahead, we want to clarify a notational difference between mathematical communities: Our focus is on finite graphs and finite dimensional polytopes close to applications, even if we use limits and infinite objects as technical tools in some of the later proofs. Therefore the $t$-function counts ordinary honest subgraphs and not graph homomorphisms, and these notions are crucially different in the finite setting before the limit.

\subsection{A short overview of the paper}

In Section~\ref{sec:Basic} we define the polytope from subgraphs statistics and its lattice version. We explain basic properties of their facet structures and how different polytopes and Ehrhart polynomials are related to each other. A complete facet description would solve many open problems in extremal graph theory, so to get any understanding of these polytopes we inscribe more well-studied polytopes and semi-algebraic sets in them. 

The spine of our polytopes is defined in Section~\ref{sec:Spine}. The convex hull of a finite number of points on the spine is a cyclic polytope inscribed in the polytope of subgraph statistics, and the convex hull of the spine is a semi-algebraic set. In Section~\ref{sec:Volumes} we calculate the volumes of the inscribed sets and find both explicit formulas and integrals connected to the Selberg integral formula. The dual of the convex hull of the spine and the polytope provides a method to find certificates that polynomials are non-negative.

The convex hull of the spine never fills up all of the polytope, and can even be of lower dimensions. In Section~\ref{sec:Zonotope} we introduce the curvy zonotopes. They are semi-algebraic sets with explicit descriptions that are of the top dimension. Alternatively the spine could have been defined as the expected values of graphs from the Erd\H{o}s-Renyi graph model with different model parameters. We get the curvy zonotopes as expected values of particular exchangeable graph models.

In Section~\ref{sec:Limit} we show that the curvy zonotopes are not only of the right dimension, but that they can be chosen to get the volume arbitrary close to the polytope that they are inscribed in. The proofs relies on the theory of graph limits and Szemer\'edi regularity. Finally in Section~\ref{sec:Conjectures} we study the limit case of counting complete subgraphs and conjecture that the limit objects essentially are cyclic polytopes with infinite many vertices.

\section{Basic properties of polytopes of subgraph statistics}\label{sec:Basic}

In this section we give proper definitions of the objects of our study, and provide some first results to describe them. We follow standard notation in graph theory, as in for example Diestel \cite{diestel}.

The number of subgraphs of $G$ isomorphic to $F$ is $t^L(F,G)$. Later the letter L indicates that we work with lattice polytopes. The \emph{$F$-subgraph density in $G$} is defined as 
\[t(F,G)=\frac{t^L(F,G)}{t^L(F,K_{|G|})},\]
except when $F$ has more vertices than $G$, and then it is $0$.

\begin{definition}
Let $\mathbf{F}=(F_1,\ldots,F_d)$ be a vector of graphs and $G$ a graph. The vector valued subgraph statistics are
\[t(\mathbf{F},G)=(t(F_1,G),\ldots,t(F_d,G))\]
and
\[t^L(\mathbf{F},G)=(t^L(F_1,G),\ldots,t^L(F_d,G)).\]
\end{definition}

\begin{definition}
Let $\mathbf{F}$ be a vector of graphs and $n$ a positive integer. The \emph{polytope from subgraph statistics} $P_{\mathbf{F};n}$ and its lattice version $P^L_{\mathbf{F};n}$ are defined as
\[P_{\mathbf{F};n}=\mathrm{conv}~\{t(\mathbf{F},G)\mid  \textrm{$G$ is a graph on $n$ vertices}  \}\]
and
\[P^L_{\mathbf{F};n}=\mathrm{conv}~\{t^L(\mathbf{F},G)\mid \textrm{$G$ is a graph on $n$ vertices}  \}.\]
\end{definition}

\begin{example}\label{example:polytopes}
The polytope $P_{ (K_3, C_4, K_4 \setminus e ); 6}$  is drawn in Figure~\ref{fig:polytope}, and in Figure~\ref{fig:skeleton} is a combinatorial representation of its vertices and edges.
\end{example}

If larger examples of polytopes from subgraph statistics looks anything like in Figures~\ref{fig:polytope} and \ref{fig:skeleton}, then it will be very difficult to give an explicit facet description. And indeed many hard theorems and conjectures in extremal graph theory can be rephrased as questions about these polytopes, making a complete facet description probably impossible in general. In Figure~\ref{fig:skeleton} we tabulated the vertices by the actual subgraph counts and not the proportions $t(F,G)$. This defines the lattice polytope $P^L_{\mathbf{F};n}$, a rescaling of $P_{\mathbf{F};n}$. 
\begin{figure}[htbp]
  \begin{center}
    \includegraphics[width=0.75\textwidth]{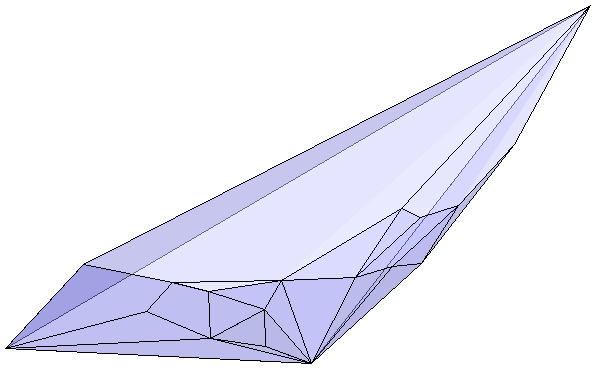}
    \caption{The polytope $P_{ (K_3, C_4, K_4 \setminus e ); 6}.$}
    \label{fig:polytope}
  \end{center}
\end{figure}
\begin{figure}[htbp]
  \begin{center}
    \includegraphics[width=0.75\textwidth]{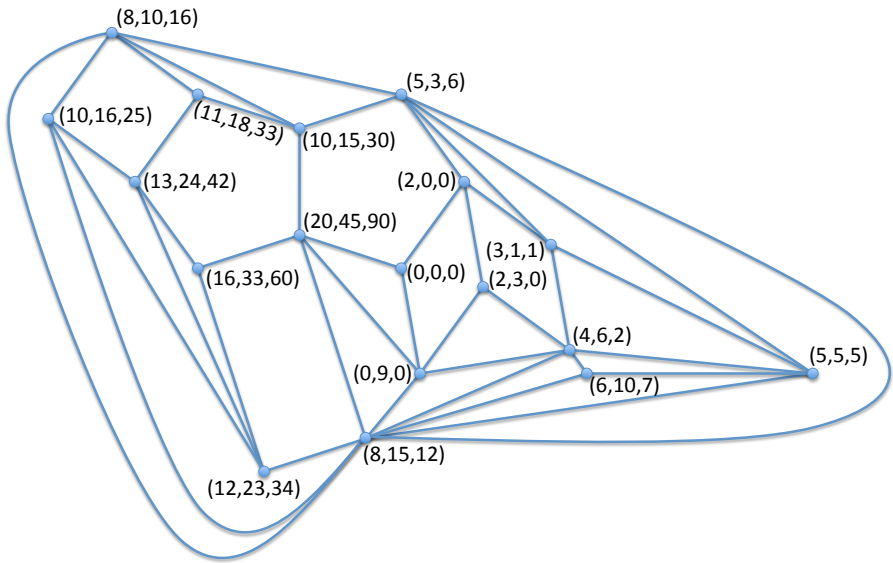}
    \caption{A combinatorial representation of the vertices and edges of $P_{ (K_3, C_4, K_4 \setminus e ); 6},$ indexed by the actual subgraph counts.}
    \label{fig:skeleton}
  \end{center}
\end{figure}
Several graphs could have the same subgraph statistics, and even if $t(\mathbf{F},G_1)$ and $t(\mathbf{F},G_2)$ are different vertices on the same facet, it is not necessary that $G_1$ and $G_2$ are related in any sense, for example as subgraphs. This is illustrated in Figure~\ref{fig:skeletonGraphs}.
\begin{figure}[htbp]
  \begin{center}
    \includegraphics[width=0.75\textwidth]{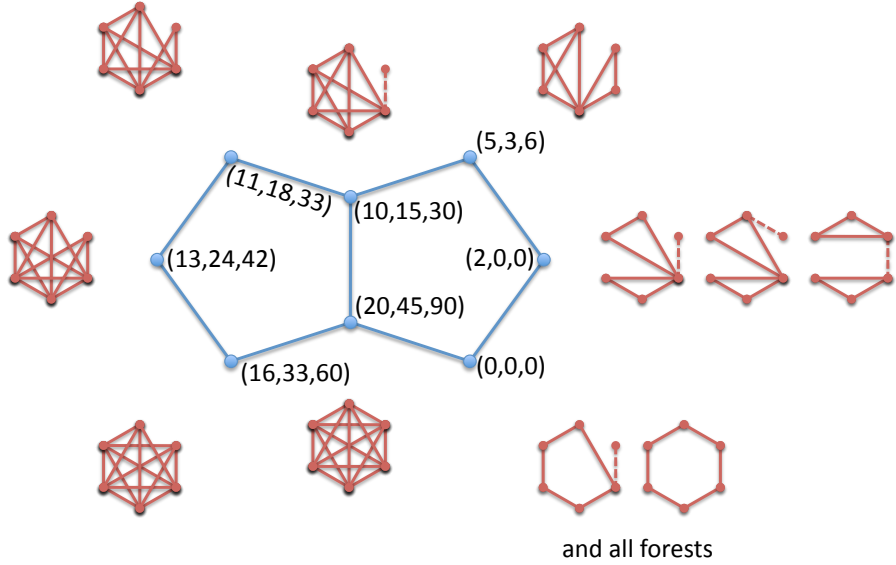}
    \caption{The graphs underlying the statistics of a piece of the polytope in Figures~\ref{fig:polytope} and \ref{fig:skeleton}. Dotted graph edges could be included or not. Recall that the subgraphs counted are $(K_3,C_4,K_4 \setminus e )$.}
    \label{fig:skeletonGraphs}
  \end{center}
\end{figure}

Before embarking on more general results about the polytopes from subgraph statistics, we point out some easy facts.
\begin{proposition}
Let $\mathbf{F}=(F_1,\ldots,F_d)$ be a vector of graphs with edges of order at least $n$, none of them a subgraph of another. Then the inequalities $x_i\ge0$ are facet defining for $P_{\mathbf{F};n}$.
\end{proposition}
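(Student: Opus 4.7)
The plan is to exhibit $d+1$ affinely independent points of $P_{\mathbf F;n}$ placed so that $d$ of them lie on each coordinate hyperplane $\{x_i=0\}$. This simultaneously forces $\dim P_{\mathbf F;n}=d$ and identifies every $x_i\ge 0$ as facet defining.

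The hypothesis ``of order at least $n$'' combined with the vanishing of $t(F_j,G)$ whenever $|V(F_j)|>|V(G)|$ pins us to the interesting case $|V(F_j)|=n$ for every $j$. Taking $G_j:=F_j$ itself, viewed as a graph on $n$ vertices, I would set $v_j:=t(\mathbf F,F_j)$. Then $t(F_j,F_j)>0$ because $F_j$ is a subgraph of itself, while for $k\ne j$ the antichain hypothesis ensures there is no subgraph of $F_j$ isomorphic to $F_k$, so $t(F_k,F_j)=0$. Hence $v_j=c_j e_j$ with $c_j>0$. For the ``origin'', I would take $v_0:=t(\mathbf F,\overline{K_n})$, the statistic of the edgeless graph on $n$ vertices, which equals $0$ because every $F_k$ has an edge (the ``with edges'' hypothesis) and therefore cannot embed into $\overline{K_n}$.

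The differences $v_j-v_0=c_j e_j$ for $j=1,\dots,d$ are a basis of $\mathbb R^d$, so the $d+1$ points $v_0,v_1,\dots,v_d$ are affinely independent and $P_{\mathbf F;n}$ is $d$-dimensional. For any fixed $i$, removing $v_i$ leaves the $d$ affinely independent points $\{v_0\}\cup\{v_j:j\ne i\}$, all lying in $\{x_i=0\}$, which exhibits $P_{\mathbf F;n}\cap\{x_i=0\}$ as a face of dimension $d-1$, i.e.\ a facet.

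The only step with any real content is the subgraph-exclusion $t(F_k,F_j)=0$ for $k\ne j$, and that is exactly the antichain hypothesis in disguise; everything else is a one-line affine-independence check. The only thing one has to be careful about is the reading of the order hypothesis: if one allows $|V(F_j)|<n$ and chooses $G_j$ to be $F_j$ padded by isolated vertices, the ``edges'' hypothesis on each $F_k$ is needed to force every embedding $F_k\hookrightarrow G_j$ to land on the edge-bearing core of $G_j$, reducing the exclusion back to the antichain condition.
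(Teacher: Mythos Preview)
Your argument is correct and is essentially the paper's own proof. The paper also takes $G_j$ to be $F_j$ padded with isolated vertices to reach $n$ vertices (so ``at least $n$'' is a typo for ``at most $n$''), observes that $t(\mathbf F,G_j)$ is nonzero only in coordinate $j$, and then notes that $\mathbf 0$ together with the $t(\mathbf F,G_j)$ for $j\neq i$ span a $(d-1)$--simplex on $\{x_i=0\}$. Your write-up is in fact a bit more careful: you make the affine-independence check explicit, you spell out why the origin is realized (via the edgeless graph and the ``with edges'' hypothesis), and you flag the role of the edge hypothesis in forcing any embedding $F_k\hookrightarrow G_j$ to land on the $F_j$-core in the padded case.
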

\begin{proof}
The number of subgraphs is non-negative and the inequality $x_i\ge0$ always defines a face, the question is if it is a facet. Let $G_j$ be the graph given by the disjoint union of $F_j$ and enough isolated vertices to get $n$ vertices in total. The vector $t(\mathbf{F},G_j)$ is non-zero exactly in the component $j$. On the hyperplane defined by $x_i=0$ is a dimension $d-1$ simplex defined by the points $\mathbf{0}$
and $t(\mathbf{F},G_j)$ for $i\neq j$. 
\end{proof}

\begin{lemma}\label{lemma:LtonL}
If $F$ and $G$ are graphs then $t^L(F,G) = \frac{|F|!}{\mathrm{Aut}(F)} {|G|\choose |F|}t(F,G)$.
\end{lemma}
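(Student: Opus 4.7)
The plan is to unwind the definition of $t(F,G)$ and reduce the claim to computing $t^L(F,K_{|G|})$, the number of subgraphs of the complete graph on $|G|$ vertices isomorphic to $F$. From $t(F,G) = t^L(F,G)/t^L(F,K_{|G|})$ we immediately get $t^L(F,G) = t^L(F,K_{|G|}) \cdot t(F,G)$, so the lemma is equivalent to the identity
\[
t^L(F,K_{|G|}) = \frac{|F|!}{|\mathrm{Aut}(F)|}\binom{|G|}{|F|}.
\]

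To establish this identity, I would count subgraphs of $K_{|G|}$ isomorphic to $F$ in two stages. First choose which $|F|$-element vertex set $S \subseteq V(K_{|G|})$ the copy of $F$ sits on; there are $\binom{|G|}{|F|}$ such choices. Second, count the edge sets on $S$ that yield a graph isomorphic to $F$, i.e.\ the number of labeled copies of $F$ on a fixed $|F|$-element vertex set.

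For the second step, the standard orbit-stabilizer argument applies: the symmetric group $S_{|F|}$ acts on the set of bijections $V(F) \to S$, the stabilizer of any bijection (up to relabeling) is exactly $\mathrm{Aut}(F)$, and two bijections give the same subgraph iff they differ by an element of $\mathrm{Aut}(F)$. Hence the number of labeled copies of $F$ on $S$ is $|F|!/|\mathrm{Aut}(F)|$. Multiplying the two factors gives the formula for $t^L(F,K_{|G|})$, and substituting back into the displayed equation yields the lemma.

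There is no real obstacle here; the only thing to be mildly careful about is the orbit-stabilizer bookkeeping on the second stage, making sure one divides by $|\mathrm{Aut}(F)|$ rather than confusing it with the automorphism group of the ambient $K_{|F|}$. Once that is done the proof is a one-line computation.
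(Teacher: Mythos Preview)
Your proposal is correct and follows the same approach as the paper: both reduce the lemma to the identity $t^L(F,K_{|G|}) = \frac{|F|!}{|\mathrm{Aut}(F)|}\binom{|G|}{|F|}$, which the paper simply asserts while you supply the orbit--stabilizer justification. The only small omission is the degenerate case $|F| > |G|$, where $t(F,G)$ is defined to be $0$ rather than by the quotient; there both sides vanish and the paper handles this explicitly.
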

\begin{proof}
If $t(F,G)=t^L(F,G)=0$ it is true. Otherwise use that $t^L(F,K_{|G|})= \frac{|F|!}{\mathrm{Aut}(F)} {|G|\choose |F|}.$
\end{proof}

\begin{lemma}\label{lemma:folklore}
If $F$ and $G$ are graphs and $n$ an integer then
\[{|G|-|F|\choose n-|F|}t^L(F,G) =\sum_{U\in{V(G)\choose n}}t^L(F,G[U]).\]
\end{lemma}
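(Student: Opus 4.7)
The plan is to prove this by a standard double-counting argument, counting the cardinality of a suitable set of pairs in two different ways. Specifically, I would consider the set
\[
S = \bigl\{(H,U) \;\big|\; H \text{ is a subgraph of } G \text{ isomorphic to } F,\; U \in \tbinom{V(G)}{n},\; V(H)\subseteq U\bigr\}.
\]
The lemma then reduces to the identity $|S|=|S|$, with the two sides of the claimed equation arising from two natural ways of stratifying $S$.

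First I would count $|S|$ by fixing the copy $H$ of $F$ in $G$. By definition there are $t^L(F,G)$ such copies, and once $H$ is chosen the number of $n$-element subsets $U\subseteq V(G)$ containing the $|F|$ vertices of $H$ is $\binom{|G|-|F|}{n-|F|}$. This gives the left-hand side of the claimed identity.

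Next I would count $|S|$ by fixing $U$ first. The key observation, which is really the heart of the argument, is that the subgraphs of $G$ isomorphic to $F$ whose vertex set lies inside $U$ are in natural bijection with the subgraphs of the induced graph $G[U]$ isomorphic to $F$ (since whether an edge of $H$ appears in $G$ is the same as whether it appears in $G[U]$ when $V(H)\subseteq U$). Thus for each $U$ the number of admissible $H$ is precisely $t^L(F,G[U])$, and summing over $U\in\binom{V(G)}{n}$ yields the right-hand side.

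I do not expect any real obstacle: the only point requiring a moment of care is the edge case when $|F|>|G|$ or $n<|F|$, where both sides vanish (by the convention that $t^L(F,\cdot)=0$ in these cases, and by the standard convention on binomial coefficients), so the identity still holds trivially. Everything else is bookkeeping.
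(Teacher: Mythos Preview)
Your proposal is correct and is essentially the same double-counting argument the paper uses: fix a copy of $F$ in $G$ and count the $n$-sets $U$ containing its vertex set, yielding the $\binom{|G|-|F|}{n-|F|}$ factor. The paper's proof also disposes of the degenerate cases first (phrasing the condition as ``not $|F|\le n\le |G|$'', which in particular covers $n>|G|$ as well), but otherwise your write-up matches it closely.
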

\begin{proof}
If not $|F|\le n \le|G|$ then both sides are zero. Otherwise, consider a particular copy of $F$ in $G$. On the right hand side it is counted for every subset $U$ of size $n$ containing the vertex set of that copy of $F$. The set $U$ contains $n-|F|$ elements to be chosen among the $|G|-|F|$ vertices outside that particular copy of $F$.
\end{proof}

\begin{proposition}\label{prop:inclusion}
Let  $\mathbf{F}$ be a vector of graphs of order at most $n$.
If $n \leq n' \leq n''$ then $P_{\mathbf{F};n'} \supseteq P_{\mathbf{F};n''}$.
\end{proposition}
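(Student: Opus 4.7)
The plan is to show that every vertex $t(\mathbf{F},G)$ of $P_{\mathbf{F};n''}$ lies in $P_{\mathbf{F};n'}$, by writing it as a uniform convex combination of the values $t(\mathbf{F},G[U])$ over all induced subgraphs $G[U]$ of size $n'$; since each $G[U]$ is a graph on $n'$ vertices, $t(\mathbf{F},G[U])$ is a vertex of $P_{\mathbf{F};n'}$, and the conclusion follows.

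Concretely, fix a graph $G$ on $n''$ vertices and a single $F=F_i$ from $\mathbf{F}$. The assumption $|F|\le n\le n'\le n''$ guarantees the sums and binomial coefficients below are well defined. First I would apply Lemma~\ref{lemma:folklore} with parameter $n'$ to rewrite
\[
\binom{|G|-|F|}{n'-|F|}\,t^L(F,G)=\sum_{U\in\binom{V(G)}{n'}} t^L(F,G[U]),
\]
and then convert each $t^L$ on both sides to a density via Lemma~\ref{lemma:LtonL}, cancelling the common factor $|F|!/\mathrm{Aut}(F)$. The elementary binomial identity $\binom{|G|-|F|}{n'-|F|}\binom{|G|}{|F|}=\binom{|G|}{n'}\binom{n'}{|F|}$ then collapses the prefactors to give the clean averaging formula
\[
t(F,G)=\frac{1}{\binom{n''}{n'}}\sum_{U\in\binom{V(G)}{n'}} t(F,G[U]).
\]
The right-hand weights are non-negative and sum to $1$, independently of $F$.

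Applying this identity componentwise to each $F_j$ yields
\[
t(\mathbf{F},G)=\frac{1}{\binom{n''}{n'}}\sum_{U\in\binom{V(G)}{n'}} t(\mathbf{F},G[U]),
\]
exhibiting $t(\mathbf{F},G)$ as a convex combination of vertices of $P_{\mathbf{F};n'}$. Hence every vertex of $P_{\mathbf{F};n''}$ lies in $P_{\mathbf{F};n'}$, and by taking convex hulls we get $P_{\mathbf{F};n''}\subseteq P_{\mathbf{F};n'}$. Iterating (or applying the argument directly at the top level) gives the chain for $n\le n'\le n''$. There is no real obstacle here; the only subtlety is checking that the binomial identity indeed makes the averaging weights independent of the particular $F_j$, which is exactly what lets the same convex combination work simultaneously for all coordinates.
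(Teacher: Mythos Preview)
Your argument is correct and essentially identical to the paper's: both apply Lemma~\ref{lemma:folklore} with parameter $n'$, convert via Lemma~\ref{lemma:LtonL}, and use the same binomial identity to obtain the uniform averaging formula $t(\mathbf{F},G)=\binom{n''}{n'}^{-1}\sum_U t(\mathbf{F},G[U])$. One tiny wording nit: the points $t(\mathbf{F},G[U])$ are guaranteed to lie in $P_{\mathbf{F};n'}$ but need not be \emph{vertices} of it, though this does not affect the proof.
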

\begin{proof}
Let $G$ be a graph on $n''$ vertices and let $F$ be any graph in $\mathbf{F}$. First combine Lemma~\ref{lemma:LtonL} and Lemma~\ref{lemma:folklore} to get
\[t(F,G)= \frac{{n'  \choose |F|}   }{{|G|  \choose |F|}{|G|-|F|\choose n'-|F|}}  \sum_{U\in{V(G)\choose n'}}t(F,G[U]).\]
By expanding and simplifying, one gets that $\frac{{n'  \choose |F|}   }{{|G|  \choose |F|}{|G|-|F|\choose n'-|F|}} = {|G|\choose n'} ^{-1}.$
Collected into vectors this is
\[ t(\mathbf{F},G)= {|G|\choose n'} ^{-1} \sum_{U\in{V(G)\choose n'}}  t(\mathbf{F},G[U])\]
and all vertices of $P_{\mathbf{F};n''}$ are in simplices spanned by points in $P_{\mathbf{F};n'}$. 
\end{proof}

For any number $k,$ a polytope $P$ can be inflated to $kP= \{kp | p\in P \}$. For a lattice polytope $P$ and positive integers $k$, the number of lattice points in $kP$ is the \emph{Ehrhart polynomial} $E_P(k).$ We refer to chapter 12 of Miller and Sturmfels \cite{millerSturmfels} for a proof of this, and a description of the connections to algebraic geometry. This is a translation of Proposition~\ref{prop:inclusion} into the lattice polytope setting.
\begin{proposition}
Let  $\mathbf{F}=(F_1,\ldots,F_d)$ be a vector of graphs of order $n$. Then \[E_{P^L_{\mathbf{F};n''}}\left({ n' \choose n} k \right)  \le E_{P^L_{\mathbf{F};n'}}\left({ n'' \choose n } k \right)\] if $n'' \geq n' \geq n$.
\end{proposition}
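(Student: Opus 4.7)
The plan is to reduce this statement to Proposition~\ref{prop:inclusion} by translating the containment of the ordinary polytopes into a containment of suitably scaled lattice polytopes, and then appealing to the fact that containment of polytopes immediately implies an inequality between their lattice point counts.

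First I would unpack the relation between $P^L_{\mathbf{F};m}$ and $P_{\mathbf{F};m}$. Because every $F_i$ has exactly $n$ vertices, Lemma~\ref{lemma:LtonL} gives for any graph $G$ on $m$ vertices
\[
t^L(\mathbf{F},G) \;=\; \binom{m}{n}\, D\, t(\mathbf{F},G),
\]
where $D$ is the fixed positive diagonal matrix $\mathrm{diag}\bigl(n!/\mathrm{Aut}(F_i)\bigr)_{i=1}^d$. Taking convex hulls, this yields the clean identity $P^L_{\mathbf{F};m}=\binom{m}{n}\,D\,P_{\mathbf{F};m}$. The crucial point, and what forces the ``of order $n$'' hypothesis, is that $D$ is independent of $m$ so the two cases $m=n'$ and $m=n''$ are scalings of the $P_{\mathbf{F};m}$'s by the same linear map, differing only by the binomial factor.

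Next I would scale both sides of the inequality to a common reference. Multiplying, one has
\[
\binom{n''}{n}k\cdot P^L_{\mathbf{F};n'}\;=\;\binom{n'}{n}\binom{n''}{n}k\cdot D\cdot P_{\mathbf{F};n'},
\]
\[
\binom{n'}{n}k\cdot P^L_{\mathbf{F};n''}\;=\;\binom{n'}{n}\binom{n''}{n}k\cdot D\cdot P_{\mathbf{F};n''}.
\]
By Proposition~\ref{prop:inclusion}, $P_{\mathbf{F};n'}\supseteq P_{\mathbf{F};n''}$, and applying the common positive linear map $\binom{n'}{n}\binom{n''}{n}k\cdot D$ preserves containment, so
\[
\binom{n''}{n}k\cdot P^L_{\mathbf{F};n'}\;\supseteq\;\binom{n'}{n}k\cdot P^L_{\mathbf{F};n''}.
\]

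Finally, since the two polytopes on the two sides of this containment are integer dilations of lattice polytopes and the larger contains the smaller, its number of integer lattice points is at least as large. That is exactly $E_{P^L_{\mathbf{F};n'}}\bigl(\binom{n''}{n}k\bigr)\ge E_{P^L_{\mathbf{F};n''}}\bigl(\binom{n'}{n}k\bigr)$. There is no real obstacle here; the only thing to keep straight is the bookkeeping of the two binomial scaling factors and the observation that $D$ need not preserve the ambient lattice because we never apply it directly to $P^L$, we only use it to rewrite both scaled lattice polytopes in terms of the common ordinary polytopes where Proposition~\ref{prop:inclusion} applies.
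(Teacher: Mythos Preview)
Your proof is correct and follows essentially the same route as the paper: both arguments rewrite $P^L_{\mathbf{F};m}$ as a fixed diagonal linear map applied to $\binom{m}{n}P_{\mathbf{F};m}$, use Proposition~\ref{prop:inclusion} to obtain the containment $\binom{n'}{n}P^L_{\mathbf{F};n''}\subseteq\binom{n''}{n}P^L_{\mathbf{F};n'}$, and then count lattice points. Your diagonal map $D$ is just $n!$ times the paper's $L_{\mathbf{F}}$, and your explicit inclusion of the dilation factor $k$ is a harmless cosmetic difference.
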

\begin{proof}
Define a linear rescaling map $L_\mathbf{F}:\mathbb{R}^d\rightarrow \mathbb{R}^d$ by $L_\mathbf{F}(\mathbf{e}_i) = \mathrm{Aut}(F_i)^{-1} \mathbf{e}_i.$
By Lemma~\ref{lemma:LtonL},
\[ L_\mathbf{F}\left(n!{n''\choose n}P_{\mathbf{F};n''}\right)=P^L_{\mathbf{F};n''} \textrm{ and }L_\mathbf{F}\left(n!{n'\choose n}P_{\mathbf{F};n'}\right)=P^L_{\mathbf{F};n'}.\]
According to Proposition~\ref{prop:inclusion} there is an inclusion of polytopes $P_{\mathbf{F};n''} \subseteq P_{\mathbf{F};n'}$ since $n'' \geq n'$. We rescale the inclusion to 
\[{ n' \choose n}\left(n!{n''\choose n}P_{\mathbf{F};n''}\right)\subseteq { n'' \choose n }\left(n!{n'\choose n}P_{\mathbf{F};n'}\right),\]
apply the linear rescaling map and move out some scalar factors, 
\[{ n' \choose n}L_\mathbf{F}\left(n!{n''\choose n}P_{\mathbf{F};n''}\right)\subseteq { n'' \choose n }L_\mathbf{F}\left(n!{n'\choose n}P_{\mathbf{F};n'}\right),\]
or equivalently, ${ n' \choose n}P^L_{\mathbf{F};n''} \subseteq { n'' \choose n}P^L_{\mathbf{F};n'}$. Counting lattice points gives the desired result.
\end{proof}
In the proposition it is required that all graphs in $\mathbf{F}$ are of the same order, and this can partially be generalized by adding isolated vertices to get graphs of the same order.
\begin{example}
If $\mathbf{F}$ is the graph vector of the path on three vertices and the triangle, then
\[ E_{P_{\mathbf{F};3}^L} \left( { 4 \choose 3 } k \right) =  E_{P_{\mathbf{F};4}^L} \left( { 3 \choose 3 } k \right) = 8k^2+6k+1\]
and
\[E_{P_{\mathbf{F};3}^L} \left( { 5 \choose 3 } k \right) =  50k^2+15k+1 \geq 48k^2+13k+1 =  E_{P_{\mathbf{F};5}^L} \left( { 3 \choose 3 } k \right).\]
\end{example}

\section{The spine of polytopes}\label{sec:Spine}
Since it's hard to understand the polytopes exactly, we inscribe more accessible polytopes and semi-algebraic sets within them.
For a vector $\mathbf{F}$ of $m$ graphs, the \emph{spine} is the generalized moment curve
\[ \{ (p^{e_1}, p^{e_2}, \ldots, p^{e_m} ) \mid 0\leq p \leq 1 \}, \]
where $e_i$ is the number of edges in $F_i$. A graph $G$ from the Erd\H{o}s-R\'enyi random graph model $\mathcal{G}(n,p)$ have $n$ vertices, and edges are included independently with probability $p$. 
\begin{proposition}\label{prop:spine}
Let $\mathbf{F}=(F_1,\ldots,F_d)$ be a vector of graphs of order at most $n$, then the spine $\{ (p^{e_1}, p^{e_2}, \ldots, p^{e_d} ) \mid 0\leq p \leq 1 \}$  is in $P_{\mathbf{F};n}$.
\end{proposition}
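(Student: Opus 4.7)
The plan is to exhibit each point on the spine as an expected value of $t(\mathbf{F},G)$ for $G$ drawn from the Erd\H{o}s-R\'enyi model $\mathcal{G}(n,p)$, and then invoke convexity of $P_{\mathbf{F};n}$ to conclude that this expectation lies inside the polytope.

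Concretely, I would fix $p \in [0,1]$ and compute $E_{G \sim \mathcal{G}(n,p)}[t^L(F_i,G)]$ for each $i$. Writing $t^L(F_i,G)$ as a sum of indicator random variables, one per copy of $F_i$ sitting inside $K_n$, linearity of expectation gives
\[ E[t^L(F_i,G)] \;=\; t^L(F_i,K_n)\cdot p^{e_i}, \]
since a fixed copy appears in $G$ precisely when all $e_i$ of its edges are independently realized, an event of probability $p^{e_i}$. Dividing by $t^L(F_i,K_n)$, which is nonzero because $|F_i|\le n$, yields $E[t(F_i,G)] = p^{e_i}$. Assembling coordinates, $E[t(\mathbf{F},G)] = (p^{e_1},\ldots,p^{e_d})$.

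Now $P_{\mathbf{F};n}$ is by definition the convex hull of the finite set $\{t(\mathbf{F},G) : |V(G)|=n\}$, so the expectation of $t(\mathbf{F},G)$ under any probability distribution on this finite set is a convex combination of vertices of $P_{\mathbf{F};n}$ and hence lies in it. Letting $p$ vary over $[0,1]$ traces out the entire spine and finishes the proof. There is no genuine obstacle here: the argument is essentially a one-line application of edge-independence in $\mathcal{G}(n,p)$ together with convexity, and the only thing to double-check is that the normalization in the definition of $t(F,G)$ cancels cleanly with $t^L(F,K_n)$, which it does.
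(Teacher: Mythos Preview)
Your proof is correct and follows essentially the same approach as the paper: compute the expectation of $t(\mathbf{F},G)$ under the Erd\H{o}s--R\'enyi model $\mathcal{G}(n,p)$ via edge-independence, obtain $(p^{e_1},\ldots,p^{e_d})$, and invoke convexity of $P_{\mathbf{F};n}$. Your version is in fact slightly more explicit about the linearity-of-expectation step and the normalization, but the argument is the same.
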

\begin{proof}
The expected value of $t(F,G)$ with $G$ from $\mathcal{G}(n,p)$ is $p^e$ if $F$ have $e$ edges, since the edges of $G$ are included independently with probability $p$. For every instance of graphs $G$ from $\mathcal{G}(n,p)$ the vector $t(\mathbf{F},G)$ is a point in $P_{\mathbf{F};n}$. The expected value
of $t(\mathbf{F},G),$
\[ (p^{e_1}, p^{e_2}, \ldots, p^{e_m} ), \]
is then also a point in $P_{\mathbf{F};n}$.
\end{proof}

In Figure~\ref{fig:spine} is the polytope $P_{(K_3,C_4,K_4 \setminus e );6)}$ from Figure~\ref{fig:polytope} drawn with its spine.
\begin{figure}[htbp]
  \begin{center}
    \includegraphics[width=0.75\textwidth]{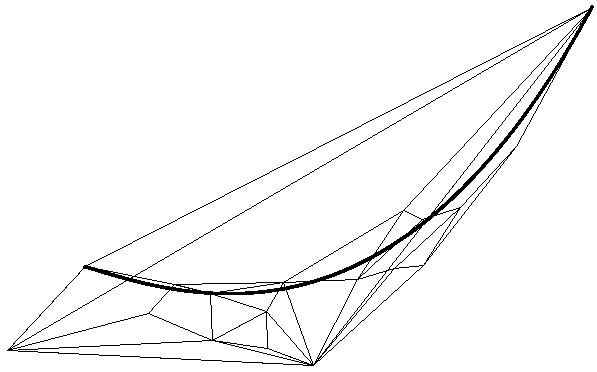}
    \caption{The polytope in Figure~\ref{fig:polytope} with its spine.}
    \label{fig:spine}
  \end{center}
\end{figure}
The point of Proposition~\ref{prop:spine} is that the spine is a generalized moment curve inside $P_{\mathbf{F};n}$. The convex hull of a finite number of points on the spine is a cyclic polytope. This can be seen directly by using generalized van der Monde matrices instead of the ordinary one in Ziegler's textbook derivation of the combinatorial structure of cyclic polytopes \cite{ziegler}. This shows that there are cyclic polytope inscribed in $P_{\mathbf{F};n}$. 

The convex hull of all of the spine is not a polytope, but its boundary can be algebraically described. In Figure~\ref{fig:cyclic} is the spine from Figure~\ref{fig:spine} drawn with its convex hull. Since the boundary structure of the convex hull in Figure~\ref{fig:cyclic}
\begin{figure}[htbp]
  \begin{center}
    \includegraphics[width=0.75\textwidth]{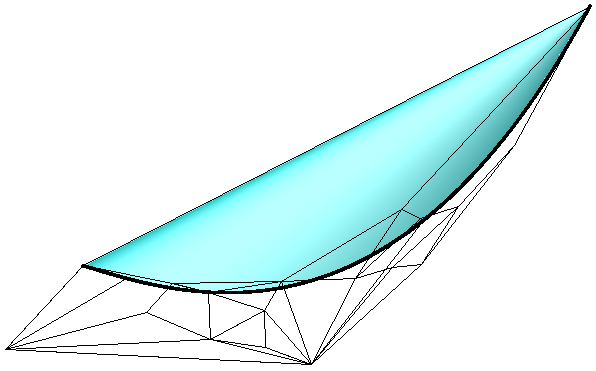}
    \caption{The spine in Figure~\ref{fig:spine} drawn with its convex hull.}
    \label{fig:cyclic}
  \end{center}
\end{figure}
is not very clear from this angle, we include in Figure~\ref{fig:coolCurve} the same spine with its convex hull, but from another perspective.
\begin{figure}[htbp]
  \begin{center}
    \includegraphics[width=0.75\textwidth]{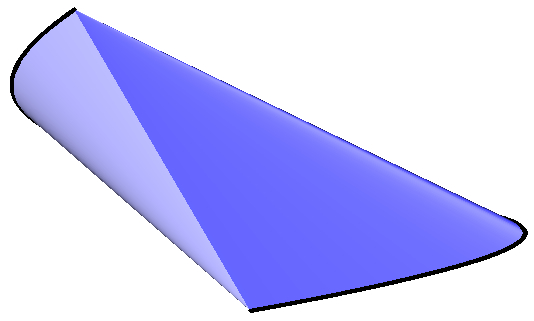}
    \caption{The spine with its convex hull from  Figure~\ref{fig:cyclic} drawn from another perspective.}
    \label{fig:coolCurve}
  \end{center}
\end{figure}

\subsection{Volumes}\label{sec:Volumes}
Inside our polytopes we have convex hulls of generalized moment curves and their volumes bound the volumes of polytopes from subgraph statistics. For the ordinary moment curve $\{ (p,p^2,\cdots, p^d) \mid 0 \leq p \leq 1 \}$ the volume of its convex hull was calculated by Karlin and Shapley \cite{karlinShapley}. 
To calculate the volume of the convex hull of the spine, and for later applications, we need the following standard approximation.
\begin{lemma}\label{lemma:volumeConvergingConvexHulls}
Let $A \subseteq B \subseteq [0,1]^d$ be sets so that for any $b \in B$ there is an $a \in A$ with
$| a-b | \leq \varepsilon$, then
\[ \textrm{Vol}(\textrm{conv}\, B) - \textrm{Vol}(\textrm{conv}\, A) \leq k_d\varepsilon \]
where $k_d$ only depends on the dimension $d$.
\end{lemma}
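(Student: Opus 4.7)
The plan is to show that $\mathrm{conv}\, B$ is contained in an $\varepsilon$-neighborhood of $\mathrm{conv}\, A$, and then use a standard Steiner-type estimate to bound the volume loss.

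First I would verify the containment $\mathrm{conv}\, B \subseteq \mathrm{conv}\, A + \varepsilon \overline{B}^d$, where $\overline{B}^d$ is the closed Euclidean unit ball. Any $x \in \mathrm{conv}\, B$ is a finite convex combination $x = \sum \lambda_i b_i$ with $b_i \in B$. For each $b_i$ choose $a_i \in A$ with $|a_i - b_i| \leq \varepsilon$, and set $y = \sum \lambda_i a_i \in \mathrm{conv}\, A$. Then $|x-y| \leq \sum \lambda_i |b_i - a_i| \leq \varepsilon$, which gives the inclusion.

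Next I would bound the volume of the $\varepsilon$-thickening of the convex body $K := \mathrm{conv}\, A \subseteq [0,1]^d$. By Steiner's formula,
\[
\mathrm{Vol}(K + \varepsilon \overline{B}^d) = \sum_{i=0}^{d} \binom{d}{i} W_i(K)\, \varepsilon^i,
\]
where $W_i(K)$ are the quermassintegrals of $K$. By the monotonicity of quermassintegrals under inclusion of convex bodies and the fact that $K \subseteq [0,1]^d$, each $W_i(K)$ is bounded by a constant $c_{d,i}$ depending only on $d$. Subtracting $W_0(K) = \mathrm{Vol}(K)$ leaves only terms of order $\varepsilon^i$ with $i \geq 1$, so
\[
\mathrm{Vol}(\mathrm{conv}\, B) - \mathrm{Vol}(\mathrm{conv}\, A) \leq \sum_{i=1}^{d} \binom{d}{i} c_{d,i}\, \varepsilon^i.
\]
For $\varepsilon \leq 1$ each $\varepsilon^i \leq \varepsilon$, so the right-hand side is at most $k_d \varepsilon$ for an appropriate $k_d$. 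For $\varepsilon > 1$ the inequality is trivial since both volumes lie in $[0,1]$, so enlarging $k_d$ if necessary handles that range too.

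The main technical point, and probably the only thing that deserves care, is invoking the uniform bound on the quermassintegrals; this is where it matters that $K$ is convex and lies in the unit cube, since without convexity Steiner's formula would not apply and the $\varepsilon$-neighborhood of an arbitrary set in $[0,1]^d$ can have volume far exceeding a linear function of $\varepsilon$. A reader who prefers to avoid quermassintegrals can instead prove the estimate directly by integrating over the boundary of $K$, using that the surface area of a convex body in $[0,1]^d$ is at most that of $[0,1]^d$ (a classical monotonicity property of convex surface area), which gives the same conclusion with an explicit $k_d$.
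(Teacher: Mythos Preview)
Your proof is correct. Both your argument and the paper's rest on the same underlying fact: the Hausdorff distance between $\mathrm{conv}\,A$ and $\mathrm{conv}\,B$ is at most $\varepsilon$, and for convex bodies inside the unit cube the volume difference is $O(\varepsilon)$ in the Hausdorff distance. The realizations differ. The paper works with support functions: from the hypothesis one gets $H_B(\xi)-\varepsilon\le H_A(\xi)\le H_B(\xi)$ for unit $\xi$, so the \emph{inner} parallel body $C$ of $\mathrm{conv}\,B$ at distance $\varepsilon$ sits inside $\mathrm{conv}\,A$, and $\mathrm{Vol}(\mathrm{conv}\,B)-\mathrm{Vol}(C)\le \varepsilon\cdot S_B$, with the surface area $S_B$ bounded because $B\subseteq[0,1]^d$. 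You instead take the \emph{outer} parallel body of $\mathrm{conv}\,A$, prove the containment $\mathrm{conv}\,B\subseteq \mathrm{conv}\,A+\varepsilon\overline{B}^d$ by an elementary convex-combination argument, and invoke Steiner's formula together with monotonicity of quermassintegrals. Your containment step is arguably cleaner than the support-function route, at the cost of citing the full Steiner formula; your closing remark about bounding via surface area directly is in fact exactly what the paper does.
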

\begin{proof}
For $X=A,B$  and $\xi \in \mathbb{R}^d$ set 
\[ H_X(\xi)=\sup_{x \in X} \langle x,\xi \rangle = \sup_{x \in \textrm{conv}\,X} \langle x,\xi \rangle.  \]
By assumption, there is always a point within distance $\varepsilon$ in $A$ from each point in $B$,
so $H_B(\xi)-\varepsilon \leq H_A(\xi) \leq H_B(\xi)$ for all $\xi$. Using the setup of
H\"ormander \cite{hormander}, section 4.3, the function $H_C(\xi)=H_B(\xi)-\varepsilon$ defines a
convex body $C=\{x \mid \langle x, \xi \rangle \leq H_C(\xi), \xi \in  \mathbb{R}^d \}
\subseteq \textrm{conv}\, A \subseteq  \textrm{conv}\, B$ with
\[ \textrm{Vol}( \textrm{conv}\, B ) - \textrm{Vol}(C) \leq \varepsilon S_B \]
where $S_B$ is the surface area of the closed convex hull of $B$. The surface area can be estimated
from above with a $k_d$ since the volume of $B$ is at most one, for example as in Section I.8.3 of \cite{barvinok}. 
\end{proof}

\begin{proposition}\label{proposition:convergenceOfCyclic}
Let $e_1 < e_2 < \cdots < e_d,$ and $n$ be positive integers. Consider the cyclic polytope $P$
on the vertices
\[ \{ (x^{e_1},x^{e_2},\ldots,x^{e_d}) \mid x=i/n, \,\,i=0,1,2,\ldots,n \} \]
and the convex hull $C$ of the moment curve
\[ \{ (x^{e_1},x^{e_2},\ldots,x^{e_d}) \mid  0\leq x \leq 1\}. \]
Then
\[0\leq \textrm{Vol}(C) - \textrm{Vol}(P) \leq \frac{k_d}{n} (e_1+e_2+\cdots+e_d),\]
where $k_d$ is a constant depending only on the dimension $d$.
\end{proposition}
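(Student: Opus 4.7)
The plan is to apply Lemma~\ref{lemma:volumeConvergingConvexHulls} with $B$ equal to the full moment curve and $A$ equal to the sampled subset $\{(x^{e_1},\ldots,x^{e_d})\mid x=i/n,\ i=0,\ldots,n\}$. The lower bound $0\le\mathrm{Vol}(C)-\mathrm{Vol}(P)$ is immediate because the vertices of $P$ lie on the moment curve, so $P\subseteq C$.

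For the upper bound, the key step is a Hausdorff-type estimate between $A$ and $B$. Given any $x\in[0,1]$, pick $i\in\{0,1,\ldots,n\}$ with $|x-i/n|\le 1/n$. Using the factorization $a^e-b^e=(a-b)(a^{e-1}+a^{e-2}b+\cdots+b^{e-1})$ for $a,b\in[0,1]$, each coordinate obeys $|x^{e_j}-(i/n)^{e_j}|\le e_j|x-i/n|$, so in the Euclidean norm
\[
\bigl\|(x^{e_1},\ldots,x^{e_d})-((i/n)^{e_1},\ldots,(i/n)^{e_d})\bigr\|\le\frac{1}{n}\sqrt{e_1^2+\cdots+e_d^2}\le\frac{e_1+\cdots+e_d}{n}.
\]
Thus every point of $B$ lies within distance $\varepsilon:=(e_1+\cdots+e_d)/n$ of some point of $A\subseteq B\subseteq[0,1]^d$.

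Lemma~\ref{lemma:volumeConvergingConvexHulls} then yields $\mathrm{Vol}(\mathrm{conv}\,B)-\mathrm{Vol}(\mathrm{conv}\,A)\le k_d\varepsilon$, and since $\mathrm{conv}\,A=P$ and $\mathrm{conv}\,B=C$ this is exactly the claimed inequality (after absorbing the factor $\sqrt{d}/d\le 1$ into $k_d$ if one prefers the $\ell^1$-style bound).

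There is no real obstacle here: the proof is a routine combination of the mean-value estimate for $x\mapsto x^{e_j}$ on $[0,1]$ with the already-proved Lemma~\ref{lemma:volumeConvergingConvexHulls}. The only mild subtlety is choosing a norm so that the bound reads cleanly as the sum $e_1+\cdots+e_d$ rather than $\sqrt{\sum e_j^2}$; either form is easily absorbed into the dimension-dependent constant $k_d$.
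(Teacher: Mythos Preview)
Your proof is correct and follows the same approach as the paper: bound the distance from any curve point to the nearest sampled point by $(e_1+\cdots+e_d)/n$ via a Lipschitz estimate, then invoke Lemma~\ref{lemma:volumeConvergingConvexHulls}. The paper phrases the Lipschitz bound via the vector-valued mean value theorem while you use the elementary factorization $a^e-b^e=(a-b)\sum a^{e-1-k}b^{k}$ coordinatewise, but this is the same computation; your final parenthetical about absorbing a $\sqrt{d}/d$ factor is unnecessary since you already have $\sqrt{\sum e_j^2}\le\sum e_j$ directly.
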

\begin{proof}
Let $f:[0,1]\rightarrow \mathbb{R}^d$ be $x\mapsto (x^{e_1},x^{e_2},\ldots,x^{e_d})$. From the vector valued
mean value theorem
\[ | f(y)-f(x) | \leq |y-x| \sup_{ 0 \leq t \leq 1} | f'(y+t(x-y)) | \]
applied with $y=\lfloor xn \rfloor /n$ and
$ | f'(y+t(y-x)) | \leq | f'(1) | = e_1+e_2+\cdots+ e_d$, we get that any
point on the moment curve is within distance
\[ \varepsilon = \frac{e_1+e_2+\cdots+ e_d}{n} \]
of a vertex of the polytope $P$. Now use Lemma~\ref{lemma:volumeConvergingConvexHulls}.
\end{proof}

We treat Schur polynomials as they are defined by fractions of generalized Vandermonde matrices instead of using representation theory. For these elementary facts we refer to Sagan \cite{sagan}. We will also use the Pfaffian, and refer to \cite{forresterBook,mehta} for definitions and basic identities.


\begin{theorem}\label{theorem:volumeConvexHullSpine}
Let $m>1$ and let $\mathbf{F}=(F_1,\ldots,F_{2m})$ be a vector of $2m$ distinct graphs with edges. The volume of the convex hull of the $2m$--dimensional spine of $\mathbf{F}$
\[\mathrm{Vol}(\mathrm{conv}\, \{  ( p^{e_1}, p^{e_2},  \ldots,p^{e_{2m}} ) \mid  0 \leq p \leq 1 \} )\]
is
\[\frac{1}{(2m)!m!}\int_{[0,1]^m} S_\lambda(x_1,x_1,x_2,x_2,\ldots,x_m,x_m)  \prod_{0\le i<j\le m}(x_i-x_j)^4 \,\, \mathrm{d}x_1\cdots \mathrm{d}x_m , \]
where $\lambda_i+2m-i = e_i$ in the Schur polynomial $S_\lambda$, and it is assumed that all $e_i$ are different and greater than $0$.

And in the odd dimensional case   $\mathbf{F}=(F_1,\ldots,F_{2m+1})$ we get 
\[\frac{1}{(2m+1)!m!}\int_{[0,1]^m} S_\lambda(x_1,x_1,x_2,x_2,\ldots,x_m,x_m,1)  \prod_{0\le i<j\le m}(x_i-x_j)^4  \prod_{0\le i\le m}(1-x_i)^2\,\, \mathrm{d}x_1\cdots \mathrm{d}x_m , \]
where $\lambda_i+2m+1-i= e_i $ in the Schur polynomial $S_\lambda$, and it is assumed that all $e_i$ are different and greater than $0$.
\end{theorem}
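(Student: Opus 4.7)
The plan is to approximate the convex hull of the spine by cyclic polytopes and compute their volumes via a star triangulation from the origin. Writing $\gamma(p)=(p^{e_1},\dots,p^{e_d})$ and $C_n=\mathrm{conv}\{\gamma(i/n):0\le i\le n\}$, Proposition~\ref{proposition:convergenceOfCyclic} reduces the problem to evaluating $\lim_{n\to\infty}\mathrm{Vol}(C_n)$. Since all $e_i>0$, the origin $\gamma(0)=\mathbf{0}$ is a vertex of $C_n$, and since $C_n$ is simplicial, coning from the origin yields a triangulation, so
\[\mathrm{Vol}(C_n)=\sum_{F\not\ni\mathbf{0}}\frac{1}{d!}\det\bigl(\gamma(s_1),\dots,\gamma(s_d)\bigr),\]
the sum being over facets $F=\{\gamma(s_1),\dots,\gamma(s_d)\}$ of $C_n$ not containing the origin.

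I identify these facets using Gale's evenness condition. In the even case $d=2m$ the relevant facets are unions of consecutive-index blocks of even length; in the odd case $d=2m+1$ they are the same with a single odd block allowed at the right endpoint. A dimension count shows that the generic configuration, consisting of $m$ disjoint pairs $\{i_k,i_k+1\}$ in the even case, or $m$ disjoint pairs together with the vertex $\gamma(1)$ in the odd case, dominates as $n\to\infty$; configurations containing a block of length $\ge 4$ contribute volumes of order $n^{-5}$ or smaller and vanish in the limit. For such a generic facet the generalized Vandermonde determinant $\det(s_j^{e_i})$ factors via the bialternant definition of Schur polynomials as $S_\lambda(s_1,\dots,s_d)\cdot\prod_{i<j}(s_j-s_i)$ with $\lambda_i+d-i=e_i$, and both factors are positive on ordered positive arguments, so the absolute value can be dropped.

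The limit step is the main computation. Setting $s_{2k-1}=x_k$ and $s_{2k}=x_k+1/n$, the intra-pair Vandermonde differences $1/n$ produce a global factor of $(1/n)^m$; the four cross-differences between any two distinct pairs $k$ and $l$ all tend to $x_l-x_k$, producing $(x_l-x_k)^4$; and in the odd case the two Vandermonde factors pairing $s_{2k-1},s_{2k}$ with the fixed endpoint $s_d=1$ give $(1-x_k)^2$ per pair. Simultaneously $S_\lambda$ converges to its evaluation on the doubled argument list $(x_1,x_1,\dots,x_m,x_m)$, or $(x_1,x_1,\dots,x_m,x_m,1)$ in the odd case, which is well-defined by polynomiality. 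The resulting sum over pair positions is a Riemann sum of the form $(1/n)^m\sum f(i_1/n,\dots,i_m/n)$ on the ordered simplex $0<x_1<\dots<x_m<1$, converging to the corresponding integral by uniform continuity of the integrand on the closed cube. Symmetry in $x_1,\dots,x_m$ then replaces the ordered simplex by $[0,1]^m$ at the cost of $1/m!$, which combines with $1/d!$ from the simplex volume formula to yield exactly $1/((2m)!\,m!)$ and $1/((2m+1)!\,m!)$ respectively.

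I expect the main obstacles to be bookkeeping rather than conceptual: matching whichever bialternant sign convention I use with the author's indexing $\lambda_i+d-i=e_i$, carefully accounting for the non-generic facets in the odd case (where a $(2k+1)$-block at the right boundary replaces a pair plus the endpoint), and verifying that these non-generic contributions are indeed $o(1)$. All of these reduce to elementary estimates on the Riemann sum and its boundary.
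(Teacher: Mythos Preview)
Your proposal is correct and follows essentially the same route as the paper's proof: approximate by cyclic polytopes, triangulate from the origin, identify facets by Gale's evenness condition, factor the generalized Vandermonde determinants via the bialternant definition of $S_\lambda$, and pass to the limit as a Riemann sum symmetrized over $[0,1]^m$. One simplification you can borrow from the paper: in the even case every facet not containing $\mathbf{0}$ is parametrized as $m$ pairs $\{i_k,i_k+1\}$ with $i_k+1<i_{k+1}$, which already absorbs the longer even blocks as runs of adjacent pairs, so the entire facet sum is a single Riemann sum over the ordered simplex and your separate $O(n^{-5})$ estimate for ``non-generic'' facets becomes unnecessary (those configurations sit on the measure-zero boundary of the domain of integration).
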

\begin{proof}
The idea of the proof is to approximate the volume of the convex hull of the spines with the volume of cyclic polytopes. The cyclic polytopes used to approximate the volume have their vertices placed uniformly distributed on the spine. When the number of vertices of these polytopes go to infinity the volume of the polytopes converge to the volume of the convex hull of the spine.

To compute the volume of a cyclic polytope $P$ we begin by triangulating the polytope $P$ in a special way. The volume of the polytope $P$ is then computed by summing the volumes of the simplexes in the triangulation. The triangulation used is constructed as follows: Pick a vertex $v$ in $P$, the triangulation is the one consisting of all the simplexes spanned by a facet of $P$ together with $v$. This is a triangulation of $P$ since cyclic polytopes are simplicial. Note that the facets containing $v$ do not contribute to the volume and so they can be ignored.

Consider the cyclic polytope $P_n$ spanned by $\{( (i/n)^{e_1}, (i/n)^{e_2},  \ldots,(i/n)^{e_{2m}} ) \mid  0\le i\le n\} $. The polytope $P_n$ has vertices uniformly distributed on the spine of $\mathbf{F}$. When $n$ goes to infinity the volume of $P_n$ converge to the volume of the convex hull of the spine. Choose the vertex $v$ to be $\mathbf{0}$. By Gale's evenness condition \cite{ziegler} the volume of $P_n$ is
\[
\frac{1}{(2m)!} \sum_{i_1,\ldots,i_{m}} \det \left[ \left( \frac{i_k}{n} \right)^{e_j}  \,\, \left( \frac{i_k+1}{n} \right)^{e_j}   \right]_{1 \leq j \leq 2m \atop 1 \leq k \leq m}
\]
where the sequences summed over are those satisfying $i_j+1<i_{j+1}$, $0<i_1$ and $i_{m}<n$. 

 The determinants can be expressed in terms of Schur polynomials and so the volume of the polytope $P_n$ is
\[\frac{1}{(2m)!}\sum_{i_1,\ldots,i_{2m}}S_\lambda\left(\frac{i_1}{n},\frac{i_1+1}{n},\ldots,\frac{i_m}{n},\frac{i_m+1}{n}\right)  \frac{1}{n^m}\prod_{1\le j<k\le m}\frac{(i_k-i_j)^2((i_k-i_j)^2-1)}{n^4},\]
where the sum is over the same sequences as in the formula with determinants. This is a Riemann sum and so when $n$ go to infinity this converges to the integral
\[\frac{1}{(2m)!}\int_I  S_\lambda(x_1,x_1,\ldots,x_m,x_m)  \prod_{1\le i<j\le m}(x_i-x_j)^4\,\, \mathrm{d}x_1\cdots \mathrm{d}x_m,\]
where $I=\{  0<x_1<x_2<\cdots<x_m<1 \}$. Both the Schur polynomial and the product of four-powers of differences are
symmetric polynomials, and we can replace $I$ by $[0,1]^m$ and divide by a factor $m!$ to get the integral stated in the
theorem.

The integral is the volume of the convex hull of the spine according to Proposition~\ref{proposition:convergenceOfCyclic}.

The odd dimensional case is similar, but every full dimensional simplex contains the vertex $\mathbf{1}$ since the facets of a cyclic polytope 
in odd dimensions all contain $\mathbf{0}$ or $\mathbf{1}.$ 
\end{proof}

In the even dimensional case there is a nice formula for the integral in Theorem~\ref{theorem:volumeConvexHullSpine}. 
\begin{theorem}\label{thm:For}
Let $\lambda=(\lambda_1,\ldots,\lambda_{2m})$ in the Schur polynomial $S_\lambda$, then
\[ \int_{[0,1]^m} S_\lambda(x_1,x_1,x_2,x_2,\ldots,x_m,x_m)  \prod_{0\le i<j\le m}(x_i-x_j)^4 \,\, \mathrm{d}\mathbf{x}=m!\prod_{1\le i<j\le 2m}\frac{y_i-y_j}{y_i+y_j}\]
where $y_i=2m+\lambda_i-i.$

In the odd dimensional case, let $\lambda=(\lambda_1,\ldots,\lambda_{2m+1})$ in the Schur polynomial $S_\lambda$. Then
\[\int_{[0,1]^m} S_\lambda(x_1,x_1,x_2,x_2,\ldots,x_m,x_m,1)  \prod_{0\le i<j\le m}(x_i-x_j)^4 \prod_{0\le i\le m}(1-x_i)^2 \,\, \mathrm{d}\mathbf{x}=m!\prod_{1\le i<j\le 2m+1}\frac{y_i-y_j}{y_i+y_j}, \]
where $y_i=2m+1+\lambda_i-i.$

\end{theorem}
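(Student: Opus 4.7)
The plan is a three-step reduction: express the integrand as a single determinant via the confluent Vandermonde identity, convert the multi-dimensional integral into a Pfaffian by a de Bruijn-style pairing argument, and evaluate that Pfaffian with Schur's classical identity.

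First, starting from the bialternant formula $S_\lambda(z_1,\ldots,z_{2m})=\det(z_i^{y_j})/\det(z_i^{2m-j})$ and taking the confluent limit $z_{2k-1},\,z_{2k}\to x_k$, both numerator and denominator vanish to the same order $\prod_k(z_{2k}-z_{2k-1})$, and one obtains (up to an overall sign $(-1)^m$ from the collapsing Vandermonde)
\[
S_\lambda(x_1,x_1,\ldots,x_m,x_m)\prod_{1\le i<j\le m}(x_i-x_j)^4 = \pm\det A(\mathbf{x}),
\]
where $A(\mathbf{x})$ is the $2m\times 2m$ matrix whose $(2k-1)$st row is $(x_k^{y_j})_{j=1}^{2m}$ and whose $(2k)$th row is $(y_j x_k^{y_j-1})_{j=1}^{2m}$. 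Writing $\phi_j(x)=x^{y_j}$, these rows are $(\phi_j(x_k))_j$ and $(\phi_j'(x_k))_j$.

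Second, expand $\det A = \sum_\sigma \mathrm{sgn}(\sigma)\prod_k \phi_{\sigma(2k-1)}(x_k)\phi_{\sigma(2k)}'(x_k)$. For each permutation $\sigma$ the inner swap of the pair $(\sigma(2k-1),\sigma(2k))$ flips $\mathrm{sgn}(\sigma)$ but changes only the $k$th factor, so summing each pair-swap orbit produces the Wronskian $\phi_p(x_k)\phi_q'(x_k)-\phi_q(x_k)\phi_p'(x_k)$. Integrating termwise over $[0,1]^m$ and recognizing the sum over matchings as a Pfaffian expansion yields the de Bruijn integration formula
\[
\int_{[0,1]^m}\det A(\mathbf{x})\,d\mathbf{x} = m!\,\mathrm{Pf}(M),\qquad M_{pq} = \int_0^1(\phi_p\phi_q'-\phi_q\phi_p')\,dx.
\]
A direct calculation gives $M_{pq}=(y_q-y_p)/(y_p+y_q)$, and Schur's classical Pfaffian identity
\[
\mathrm{Pf}\!\left[\frac{y_q-y_p}{y_p+y_q}\right]_{1\le p,q\le 2m} = \prod_{1\le i<j\le 2m}\frac{y_j-y_i}{y_i+y_j}
\]
finishes the even case, with the accumulated signs matching because both sides are manifestly positive under the ordering $y_1>y_2>\cdots>y_{2m}>0$.

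For the odd case, adjoin to $A$ the extra row $(\phi_j(1))_j=(1,\ldots,1)$ to form the $(2m+1)\times(2m+1)$ matrix $A'$; the same confluent Vandermonde calculation identifies $\det A'$ (up to sign) with $S_\lambda(x_1,x_1,\ldots,x_m,x_m,1)\prod_{i<j}(x_i-x_j)^4\prod_i(1-x_i)^2$. Expanding $\det A'$ along the adjoined row, integrating via the same pair-swap argument, and reassembling through the Pfaffian cofactor identity yields the Pfaffian of the $(2m+2)\times(2m+2)$ augmented matrix
\[
\widetilde M = \begin{pmatrix} M & \mathbf{1} \\ -\mathbf{1}^{\top} & 0 \end{pmatrix},
\]
which is precisely the limit $y_{2m+2}\to\infty$ of Schur's identity in dimension $2m+2$ (each factor $(y_{2m+2}-y_p)/(y_{2m+2}+y_p)\to 1$), and hence equals $\prod_{1\le i<j\le 2m+1}(y_j-y_i)/(y_i+y_j)$. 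The main obstacle will be consistent sign bookkeeping across the confluent Vandermonde, the pair-swap symmetrization, and the Pfaffian cofactor step (especially in the odd case); however, since both the integrand and the proposed product are manifestly positive under the stated hypotheses, the overall sign is pinned down by positivity.
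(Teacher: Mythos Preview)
Your proposal is correct and follows essentially the same route as the paper: write the integrand as a confluent determinant via the bialternant formula, symmetrize over pair-swaps to produce Wronskians, integrate termwise to obtain the Pfaffian of $[(y_q-y_p)/(y_p+y_q)]$, and apply Schur's Pfaffian identity. The only notable difference is in the odd case, where the paper expands along the column of ones and appeals to a degree argument, while you use the equivalent augmented-Pfaffian/limit $y_{2m+2}\to\infty$ trick; both are standard and lead to the same product.
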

\begin{proof}
First we prove it in the even dimensional case.
The first step of the proof is to go back to expressing the integrand as a limit of a determinant. This limit is also a nice determinant that is possible to integrate.
Observe that $S_\lambda(x_1,x_1,x_2,x_2,\ldots,x_m,x_m)  \prod_{1\le i<j\le m}(x_j-x_i)^4$ is the limit of 
\[\frac{\det[x_i^{2m-j+\lambda_j}]_{1 \leq i,j \leq 2m}}{\det[x_i^{2m-j}]_{1 \leq i,j \leq 2m}}\frac{\det[x_i^{2m-j}]_{1 \leq i,j \leq 2m}}{\prod_{i=1}^m(x_{2i}-x_{2i-1})}
 = \frac{\det[x_i^{2m-j+\lambda_j}]_{1 \leq i,j \leq 2m}}{\prod_{j=1}^m(x_{2i}-x_{2i-1})}
\]
as $x_{2i-1}\rightarrow x_{2i}$.
Row operations on the matrix $[x_i^{2m-j+\lambda_j}]_{i,j=1,\ldots,2m}$ don't change the determinant, and subtraction of row $2i-1$ from $2i$ for
$i=1,2,\ldots, m$ yields
\[\frac{\det\left[\begin{matrix}   x_{2i}^{2m-j+\lambda_j}-x_{2i-1}^{2m-j+\lambda_j}  \\  x_{2i-1}^{2m-j+\lambda_j}  \end{matrix}
\right]_{1\leq i\leq m \atop 1\leq j\leq 2m}}{\prod_{j=1}^m(x_{2i}-x_{2i-1})}
=\det\left[\begin{matrix}  (x_{2i}^{2m-j+\lambda_j}-x_{2i-1}^{2m-j+\lambda_j})/(x_{2i}-x_{2i-1})  \\  x_{2i-1}^{2m-1-j+\lambda_j}  \end{matrix}\right]_{1\le i\le m\atop 1\le j\le 2m}.\]
Using L'H\^opital's rule as $x_{2i-1}\rightarrow x_{2i}$ for $i=1,\ldots m$, and then relabeling $x_{2i}\rightarrow x_i$, we get that the integrand in the theorem statement equals
\[\det\left[\begin{matrix}(2m-j+\lambda_j)x_i^{2m-1-j+\lambda_j} \\x_i^{2m-j+\lambda_j}\end{matrix}\right]_{1\le i\le m\atop 1\le j\le 2m}.\]
This determinant is a sum over the symmetric group $S_{2m}$ 
\[\sum_{\pi \in S_{2m}}\varepsilon(\pi)\prod_{i=1}^m(2m-\pi(2i)+\lambda_{\pi(2i)})x_i^{2m-1-\pi(2i)+\lambda_{\pi(2j)}}x_i^{2m-\pi(2i-1)+\lambda_{\pi(2i-1)}},\]
where $\varepsilon(\pi)$ is the sign of $\pi$. If we require that $\pi(2j)>\pi(2j-1)$ for all $j$ then the integrand becomes
\[
\begin{array}{rl}
\displaystyle \sum_{\pi \in S_{2m}, \,\, \pi(2j)>\pi(2j-1)}\varepsilon(\pi) \,\,\,\,\prod_{i=1}^m & \Big(
((2m-\pi(2i)+\lambda_{\pi(2i)})-(2m-\pi(2i-1)+\lambda_{\pi(2i-1)})) \\
& \quad  x_i^{2m-1-\pi(2i)+\lambda_{\pi(2i)}}x_i^{2m-\pi(2i-1)+\lambda_{\pi(2i-1)}} \Big)
\end{array}
\]
and with the notation $y_i=2m+\lambda_i-i$ it equals
\[ 
 \sum_{\pi \in S_{2m}, \,\, \pi(2j)>\pi(2j-1)}\varepsilon(\pi) \prod_{i=1}^m 
 ( y_{\pi(2i)} - y_{\pi(2i-1)} ) x^{ y_{\pi(2i)} + y_{\pi(2i-1)} -1} 
 \]
 Integrating over each $x_j$ separately gives the following value of the integral in the theorem:
 \[ 
 \sum_{\pi \in S_{2m}, \,\, \pi(2j)>\pi(2j-1)}\varepsilon(\pi) 
 \frac{  y_{\pi(2i)} - y_{\pi(2i-1)} } {  y_{\pi(2i)} + y_{\pi(2i-1)} }.
 \]
The sum is actually the Pfaffian of an anti symmetric matrix, in general for $A=[a_{i,j}]_{i,j=1,\ldots,2m}$ anti symmetric
\[\mathrm{Pf}[A]= \frac{1}{m!} \sum_{\pi \in S_{2m},\,\, \pi(2j)>\pi(2j-1)}\varepsilon(\pi)\prod_{i=1}^ma_{\pi(2i-1),\pi(2i)}.\]

Hence
\[ \int_{[0,1]^m} S_\lambda(x_1,x_1,x_2,x_2,\ldots,x_m,x_m)  \prod_{0\le i<j\le m}(x_i-x_j)^4 \,\, \mathrm{d}\mathbf{x}=
m! \mathrm{Pf}\left[\frac{y_i-y_j}{y_i+y_j}\right]_{1 \leq i,j \leq 2m},\]
but we know that in general this Pfaffian evaluates to the stated product formula, a result usually attributed to Schur~\cite{schurpfaffian}.

The first odd case is true. For higher odd, start off as in the even but then expand the matrix along its column with only ones. This gives a sum of instances from the smaller even dimensional case, and by a degree argument the formula follows.
\end{proof}

\begin{corollary}
Let $\mathbf{F}$ be a vector of $k$ distinct graphs with edge counts $e_1 \geq \cdots \geq e_{k}>0$. Then the volume of the convex hull of the $k$--dimensional spine of $\mathbf{F}$ is
\[\mathrm{Vol}(\mathrm{conv}\, \{  ( p^{e_1}, p^{e_2},  \ldots,p^{e_{k}} ) \mid  0 \leq p \leq 1 \} ) =  \frac{1}{k!} \prod_{1\le i<j\le k}\frac{e_i-e_j}{e_i+e_j}.\]
\end{corollary}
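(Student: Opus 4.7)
The plan is to deduce the corollary directly by combining Theorem~\ref{theorem:volumeConvexHullSpine} with Theorem~\ref{thm:For}. Both of those theorems are phrased under the hypothesis that the edge counts are strictly decreasing and positive, so I would first dispose of the degenerate case in which $e_i = e_{i+1}$ for some $i$. In that case two coordinates of the parametrization $p \mapsto (p^{e_1}, \ldots, p^{e_k})$ agree identically, the spine is contained in the hyperplane $\{x_i = x_{i+1}\}$, and its $k$-dimensional volume vanishes. Simultaneously the claimed right-hand product contains the factor $e_i - e_{i+1} = 0$, so both sides are zero and the corollary holds trivially. It therefore suffices to treat strictly decreasing $e_1 > e_2 > \cdots > e_k > 0$.

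In the non-degenerate setting I would set $\lambda_i := e_i - k + i$ so that $\lambda_i + k - i = e_i$, which matches the indexing of the Schur polynomial in Theorem~\ref{theorem:volumeConvexHullSpine} for both parities of $k$. The strict decrease of the $e_i$ as integers gives $\lambda_i - \lambda_{i+1} = e_i - e_{i+1} - 1 \geq 0$, and $\lambda_k = e_k \geq 1 > 0$, so $\lambda = (\lambda_1, \ldots, \lambda_k)$ is a genuine partition and the Schur polynomial $S_\lambda$ is legitimately defined. Splitting on the parity of $k$, I would apply the even (respectively odd) half of Theorem~\ref{theorem:volumeConvexHullSpine} to express the spine volume as $\frac{1}{k! \, m!}$ times the Schur-polynomial integral appearing in Theorem~\ref{thm:For}.

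The remainder is bookkeeping. Under the correspondence $y_i = k + \lambda_i - i = e_i$, Theorem~\ref{thm:For} evaluates the integral to $m! \prod_{1 \leq i < j \leq k} \frac{e_i - e_j}{e_i + e_j}$, and the factor $m!$ cancels the $m!$ in the denominator of the volume formula, leaving exactly $\frac{1}{k!} \prod_{i < j} \frac{e_i - e_j}{e_i + e_j}$. This cancellation is identical in the even and odd cases, so the corollary is a single uniform statement. There is no serious obstacle beyond tracking the indexing conventions; the pleasant point is that the auxiliary variables $y_i$ of Theorem~\ref{thm:For} collapse to the edge counts $e_i$ themselves, so the final product formula is expressed purely in terms of the data of $\mathbf{F}$.
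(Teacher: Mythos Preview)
Your proposal is correct and is precisely the intended derivation: the paper offers no separate proof of this corollary, treating it as an immediate consequence of Theorem~\ref{theorem:volumeConvexHullSpine} combined with Theorem~\ref{thm:For}, and your write-up supplies exactly that bookkeeping, including the observation that $y_i = e_i$. Your handling of the degenerate case $e_i = e_{i+1}$ is a useful addition, since the two theorems explicitly assume the $e_i$ are distinct while the corollary as stated only assumes the graphs are distinct; one could also note that the low-dimensional cases $k \le 2$ (where the hypothesis $m>1$ in Theorem~\ref{theorem:volumeConvexHullSpine} is not literally met) are immediate by direct computation.
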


Special cases of this integral have been computed before. Karlin and Shapley~\cite{karlinShapley} used the Selberg integral formula~\cite{forresterXXX,selberg} to do the case of $e_i=i$, but they arrived to the integral in a completely different way. Selberg's formula can only be applied in the cases where the edges are of consecutive magnitudes: $e_i=e+i.$

\subsection{Duality}\label{sec:Dual}
As for polytopes there is a duality theory for convex hulls of algebraic sets \cite{barvinok}. The dual of the convex hull of the moment curve $\{ (p,p^2, \ldots, p^n) \mid 0 \leq p \leq 1 \}$ parametrizes the degree $n$ polynomials that are non-negative on the interval $[0,1]$. The convex hulls of generalized moment curves are inside polytopes from subgraphs statistics, so the polytopes can be used to certify that polynomials are non-negative.
\begin{proposition}
Let $P$ be a polytope containing the generalized moment curve $\{ (p^{e_1}, p^{e_2}, \ldots, p^{e_d}) \mid 0 \leq p \leq 1 \}.$
If $ \langle(c_1,c_2, \ldots, c_d) ,v\rangle \geq -1 $
for all vertices $v$ of $P$ then the polynomial $q(x)=1+c_1x^{e_1}+c_2x^{e_2} + \cdots + c_d x^{e_d}$ is non-negative on the interval $[0,1]$.
\end{proposition}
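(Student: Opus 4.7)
The plan is to unwind the definitions and use nothing more than convexity together with the hypothesis that the moment curve sits inside $P$. The linear functional $\ell(x) = \langle (c_1,\ldots,c_d), x \rangle$ is, by the hypothesis, bounded below by $-1$ on every vertex of $P$, so by convexity (a linear functional attains its minimum over a polytope at a vertex) we have $\ell(x) \geq -1$ for every $x \in P$.

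Next I would specialize to the points on the generalized moment curve. By assumption, for every $p \in [0,1]$ the point $v(p) = (p^{e_1}, p^{e_2}, \ldots, p^{e_d})$ lies in $P$, so $\ell(v(p)) \geq -1$. Writing this out gives
\[ c_1 p^{e_1} + c_2 p^{e_2} + \cdots + c_d p^{e_d} \geq -1 \]
for all $p \in [0,1]$, which is exactly the assertion that $q(p) = 1 + c_1 p^{e_1} + \cdots + c_d p^{e_d} \geq 0$ on $[0,1]$.

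There is really no obstacle here; the content of the proposition is the observation that duality for polytopes, combined with containment of the moment curve, immediately gives a non-negativity certificate. The only thing worth emphasizing in the write-up is the (standard) fact that a linear functional on a polytope attains its extrema at vertices, which justifies extending the inequality from the vertex set of $P$ to all of $P$.
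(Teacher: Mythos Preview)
Your proof is correct and follows essentially the same approach as the paper's own argument: rewrite $q(p)$ as $1+\langle c, v(p)\rangle$ and use that $v(p)\in P$ to conclude. If anything, your version is slightly more explicit, since you spell out the standard step that the inequality $\langle c,v\rangle\geq -1$ on the vertex set extends to all of $P$ by linearity and convexity; the paper leaves this implicit.
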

\begin{proof}
The value of the polynomial $q(x)=1+c_1x^{e_1}+c_2x^{e_2} + \cdots + c_d x^{e_d}$ at the point $x=p$ is $1+\langle(p^{e_1}, p^{e_2}, \ldots, p^{e_d}), (c_1,c_2, \ldots, c_d)\rangle$. This implies that if $\langle(p^{e_1}, p^{e_2}, \ldots, p^{e_d}), (c_1,c_2, \ldots, c_d)\rangle\geq -1$ for all $p$ in $[0,1]$ then the polynomial $q$ is positive on $[0,1]$.
\end{proof}

\begin{example}

Our running example $P_{ (K_3, C_4, K_4 \setminus e ); 6}$ in Figure~\ref{fig:polytope} is perhaps not the most interesting polytope to certify non-negativity with, but we use it in an example anyways. 
The polynomial $p(x)=1-\frac{16}{3}x^3+\frac{11}{2}x^4-\frac{1}{2}x^5$ is non-negative on $[0,1]$ since $( -\frac{16}{3},\frac{11}{2},-\frac{1}{2})\cdot v \geq -1$ for all vertices $v$ of  $P_{ (K_3, C_4, K_4 \setminus e ); 6}$. Note that the point $(-\frac{16}{3},\frac{11}{2},-\frac{1}{2})$ is dual to the facet with vertices $(8/20,10/45,16/90),$ $(10/20, 15/45, 30/90),$ $(5/20, 3/45, 6/90)$, which one can find using Figure~\ref{fig:skeleton}. 

\end{example}

\section{Curvy zonotopes}\label{sec:Zonotope}
In this section we generalize the ideas used to construct spines. A very general class of random graph models called exchangeable random graph models is used instead of the Erd\H{o}s-R\'enyi  model, but the idea is the same: The expectation values from random graph models give easily parameterized sets inside the polytopes of subgraph statistics, and these sets are useful to derive information about the polytopes. The exchangeable random graph models are related to the graph limits developed by Lovasz and Szegedy \cite{lovaszSzegedy}, as explained by Diaconis and Janson \cite{diaconisJanson}. The machinery of graph limits is used more heavily in the next section where more of it is explained. In this section only the random graph models are needed.

The exchangeable random graph models is a generalization of the Erd\H{o}s-R\'enyi random graph model $\mathcal{G}(n,p),$ obtained by replacing $p$ by a measurable and symmetric function $W:[0,1]^2\rightarrow [0,1]$. Denote the set of these measurable symmetric function by $\mathcal{W}$.

\begin{definition}
Let $W$ be a function in $\mathcal{W}$. A graph $G$ from the \emph{exchangeable random graph model} $\mathcal{G}(n,W)$ is given as follows. The vertex set of $G$ is $[n]$.  Let $X_1,\ldots,X_n$ be independent random variables with uniform distribution on $[0,1]$. For every pair $1\leq i<j\leq n$ add an edge $ij$ to $G$ with probability $W(X_i,X_j).$
\end{definition}
Note that for given $X_1,\ldots,X_n$ the probability to get a graph with edge set $E$ from $\mathcal{G}(n,W)$ is
\[\prod_{ij\in E}W(X_i,X_j)\prod_{ij\in E(K_n)\setminus E}(1-W(X_i,X_j)).\]

We collect some basic facts from \cite{lovaszSzegedy}. To proceed further the expectation values of subgraph densities is computed for the exchangeable graph models. 

\begin{proposition}\label{prop:expect}
Let $F$ be a graph with vertex set $[m]$ and let $W\in\mathcal{W}$. The expected value of $t(F,G)$ when $G$ is from $\mathcal{G}(n,W)$ is
\[\mathbb{E}(t(F,G))=\int_{[0,1]^{m}}{\prod_{ij\in E(F)}W(x_i,x_j)}\mathrm{d}x_1\cdots \mathrm{d}x_m,\]
where it is assumed that $m\le n$
\end{proposition}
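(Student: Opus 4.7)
The plan is to reduce the computation to the expected number of injective graph homomorphisms from $F$ to $G$, for which the probability of any fixed injection being an embedding has a clean integral expression.

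First I would rewrite $t(F,G)$ in a form adapted to linearity of expectation. By Lemma~\ref{lemma:LtonL}, $t^L(F,G) = \frac{m!}{\mathrm{Aut}(F)}\binom{n}{m}t(F,G)$, and every subgraph copy of $F$ in $G$ corresponds to exactly $\mathrm{Aut}(F)$ injective homomorphisms $\phi:V(F)\to V(G)$. Hence
\[
\mathrm{inj}(F,G) := \#\{\phi:[m]\hookrightarrow [n] \text{ preserving edges of } F\} = \mathrm{Aut}(F)\cdot t^L(F,G) = m!\binom{n}{m}\, t(F,G).
\]
So it suffices to show that $\mathbb{E}[\mathrm{inj}(F,G)] = m!\binom{n}{m}\cdot I_F$, where $I_F$ is the integral on the right-hand side of the proposition.

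Second, I would apply linearity of expectation over all injective maps $\phi:[m]\hookrightarrow [n]$, of which there are $m!\binom{n}{m}$. For a fixed $\phi$, conditioned on the latent variables $X_1,\ldots,X_n$, the indicator that $\phi$ is an embedding has conditional probability $\prod_{ij\in E(F)} W(X_{\phi(i)},X_{\phi(j)})$, by independence of edge inclusions in the model $\mathcal{G}(n,W)$. Taking expectation over the $X_i$ (uniform and independent on $[0,1]$) and integrating out the $n-m$ coordinates not in the image of $\phi$, Fubini gives
\[
\Pr[\phi \text{ is an embedding}] = \int_{[0,1]^m}\prod_{ij\in E(F)} W(y_i,y_j)\,\mathrm{d}y_1\cdots\mathrm{d}y_m = I_F,
\]
where the substitution $y_i = x_{\phi(i)}$ is used; crucially, the value is independent of $\phi$.

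Finally, summing over all $m!\binom{n}{m}$ injective maps yields $\mathbb{E}[\mathrm{inj}(F,G)] = m!\binom{n}{m}\cdot I_F$, and dividing by $m!\binom{n}{m}$ gives $\mathbb{E}[t(F,G)] = I_F$, as desired. The main obstacle is simply bookkeeping the normalization constants correctly; once the identity $\mathrm{inj}(F,G) = m!\binom{n}{m}\, t(F,G)$ is established via Lemma~\ref{lemma:LtonL}, the rest is linearity of expectation plus Fubini. The inequality $m\le n$ is used only to guarantee that injective maps exist and that $\binom{n}{m}\neq 0$.
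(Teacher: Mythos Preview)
Your argument is correct. The identification $\mathrm{inj}(F,G)=|\mathrm{Aut}(F)|\,t^L(F,G)=m!\binom{n}{m}\,t(F,G)$ is exactly Lemma~\ref{lemma:LtonL} unwound, and once this is in place the computation of $\Pr[\phi\text{ is edge-preserving}]$ for a fixed injection $\phi$ is straightforward: conditionally on $X_1,\dots,X_n$ the relevant edge events are independent (injectivity of $\phi$ guarantees that the edges $\phi(i)\phi(j)$ for $ij\in E(F)$ are pairwise distinct), and integrating out the unused $n-m$ coordinates gives the claimed integral, independent of $\phi$.

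As for comparison with the paper: there is nothing to compare against. The paper does not prove Proposition~\ref{prop:expect}; it introduces it with the sentence ``We collect some basic facts from \cite{lovaszSzegedy}'' and then simply states the result. Your write-up is thus a genuine addition, supplying a short self-contained proof tailored to the paper's convention that $t(F,G)$ is the \emph{subgraph} density (equivalently, the injective-homomorphism density) rather than the homomorphism density used in \cite{lovaszSzegedy}. This distinction actually matters here: for the homomorphism density the identity $\mathbb{E}[t_{\mathrm{hom}}(F,G)]=I_F$ is only asymptotically true because non-injective maps contribute, whereas for the injective version your argument gives an exact equality for every $n\ge m$.
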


We use the notation $t(F,W)=\int_{[0,1]^{m}}{\prod_{ij\in E(F)}W(x_i,x_j)}\mathrm{d}x_1\cdots \mathrm{d}x_m$ for the expectation values calculated in Proposition~\ref{prop:expect}. For a vector of graphs $\mathbf{F}=(F_1,\ldots,F_d)$ we define $t(\mathbf{F},W)=(t(F_1,W),\ldots,t(F_d,W))$. 

In general it might be hard to compute $t(\mathbf{F},W)$ explicitly.

\begin{definition}
A $W \in \mathcal{W}$ is a \emph{stepfunction}, if for some partition of $[0,1]$ into intervals, the value of $W(x,y)$ is completely
determined by which parts of $[0,1]$ that $x$ and $y$ belongs to.

For any symmetric $n \times n$ matrix $M$ with entries in $[0,1]$, the stepfunction $W_M$ takes the value of $M$ at row
$i$ and column $j$ at $[(i-1)/n,i/n) \times [(j-1)/n,j/n)$.
\end{definition}
Note that the value of $W_M$ is not defined on the entire boundary of $[0,1]^2$, this do not matter for our purposes since the boundary have measure zero.

The stepfunctions are useful since any function in $ \mathcal{W}$ can be approximated by a stepfunction, and for stepfunctions $W$ it is possible to establish a polynomial expression for $t(\mathbf{F},W)$.

\begin{proposition}\label{prop:pol}
Let $M=[m_{i,j}]_{n\times n}$ be a symmetric matrix with entries in $[0,1]$ and let $\mathbf{F}$ be a vector of graphs. Then
\[t(\mathbf{F},W_M)=(p_{F_1;n}(M), p_{F_2;n}(M), \ldots, p_{F_d;n}(M)),\]
where
\[ p_{F;n}(M)
= \frac{1}{n^{|F|}} \sum_{\phi : V(F) \rightarrow [n]} \,\, \prod_{ij \in E(F)} m_{\phi(i),\phi(j)}.
\]

\end{proposition}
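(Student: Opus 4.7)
The plan is to reduce the vector-valued identity to the scalar statement $t(F, W_M) = p_{F;n}(M)$ for a single graph $F$ with $|F| = m$, since the vector form in the proposition is just the componentwise collection. For this scalar version I would apply Proposition~\ref{prop:expect} directly to rewrite $t(F, W_M)$ as the integral $\int_{[0,1]^m} \prod_{ij \in E(F)} W_M(x_i, x_j)\, \mathrm{d}x_1 \cdots \mathrm{d}x_m$, and then use the piecewise-constant structure of $W_M$ to evaluate the integral as a finite sum.

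The key step is to decompose the cube $[0,1]^m$ into the $n^m$ product boxes indexed by maps $\phi\colon V(F) \to [n]$, where the box corresponding to $\phi$ is $B_\phi = \prod_{i=1}^m [(\phi(i)-1)/n, \phi(i)/n)$. By the definition of $W_M$, whenever $(x_i, x_j) \in [(\phi(i)-1)/n, \phi(i)/n) \times [(\phi(j)-1)/n, \phi(j)/n)$ we have $W_M(x_i, x_j) = m_{\phi(i), \phi(j)}$. Hence on the box $B_\phi$ the integrand $\prod_{ij \in E(F)} W_M(x_i, x_j)$ is the constant $\prod_{ij \in E(F)} m_{\phi(i), \phi(j)}$, and each box has measure $n^{-m}$. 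Summing over $\phi$ yields
\[
t(F, W_M) = \sum_{\phi\colon V(F) \to [n]} \frac{1}{n^m} \prod_{ij \in E(F)} m_{\phi(i), \phi(j)} = p_{F;n}(M),
\]
which is the claimed expression.

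The only minor subtlety, and this is the nearest thing to an obstacle, is that $W_M$ was not defined on the full boundary of $[0,1]^2$, so the boxes $B_\phi$ do not literally cover $[0,1]^m$. However, the missing set has Lebesgue measure zero in $[0,1]^m$, and by the remark already made in the text before the proposition this can be safely ignored in the integration. Symmetry of $M$ plays no role in the calculation itself; it is only used to ensure that $W_M$ lies in $\mathcal{W}$ so that Proposition~\ref{prop:expect} applies. Once the scalar identity is established for each $F_k$, collecting the components reproduces the vector identity in the proposition.
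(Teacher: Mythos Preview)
Your proof is correct and follows essentially the same approach as the paper: both reduce to a single graph $F$, invoke the integral expression for $t(F,W_M)$, partition $[0,1]^{|F|}$ into the $n^{|F|}$ product boxes indexed by maps $\phi\colon V(F)\to[n]$, observe that the integrand is the constant $\prod_{ij\in E(F)} m_{\phi(i),\phi(j)}$ on each box of measure $n^{-|F|}$, and sum. Your remarks on the measure-zero boundary and on the role of the symmetry of $M$ are accurate and match the paper's treatment.
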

\begin{proof}
Let $F$ be a graph with vertex set $[k]$. Recall that the expectation value $t(F,W_M)$ is
\[\int_{[0,1]^{k}}{\prod_{ij\in E(F)}W_M(x_i,x_j)}\mathrm{d}x_1\cdots \mathrm{d}x_k.\]
Partition $[0,1]^{k}$ into the parts $[(i_1-1)/n,i_1/n) \times [(i_2-1)/n,i_2/n)\times\cdots \times [(i_k-1)/n,i_k/n)$, the boundary have measure zero and can be ignored. Observe that there is a bijection between the functions $\phi : V(F) \rightarrow [n]$ and the parts in the partition of $[0,1]^{k}$. The function $\phi$ corresponds to the part
$[(\phi(1)-1)/n,\phi(1)/n) \times [(\phi(2)-1)/n,\phi(2)/n)\times\cdots  \times [(\phi(k)-1)/n,\phi(k)/n)$. 

The function $\prod_{ij\in E(F)}W_M(x_i,x_j)$ is constant $\prod_{ij\in E(F)}m_{\phi(i),\phi(j)}$ on the part corresponding to $\phi$. The integral of  $\prod_{ij\in E(F)}W_M(x_i,x_j)$ over the part corresponding to $\phi$ is then $\frac{1}{n^{|F|}}\prod_{ij \in E(F)} m_{\phi(i),\phi(j)}$.

The integral over $[0,1]^{k}$ then splits into the sum
\[\int_{[0,1]^{k}}{\prod_{ij\in E(F)}W_M(x_i,x_j)}\mathrm{d}x_1\cdots \mathrm{d}x_k=\frac{1}{n^{|F|}} \sum_{\phi : V(F) \rightarrow [n]} \,\, \prod_{ij \in E(F)} m_{\phi(i),\phi(j)}.\]
\end{proof}
The polynomials $p_{F;n}(M)$ in Proposition~\ref{prop:pol} can be computed and evaluated, this makes it possible to study their image as a subset of the polytopes.

\begin{definition}\label{def:cz}
Let $\mathbf{F}=(F_1,\ldots,F_d)$ be a vector of graphs and $n$ a positive integer. The \emph{curvy zonotope} is
\[ Z_{\mathbf{F};n} = \left\{ (p_{F_1;n}(\mathbf{x}), p_{F_2;n}(\mathbf{x}), \ldots, p_{F_d;n}(\mathbf{x}))  
\mid \mathbf{x} \in [0,1]^{n^2},x_{ij}=x_{ji} \right\} \]
where
\[ p_{F;n}(\mathbf{x})
= \frac{1}{n^{|F|}} \sum_{\phi : V(F) \rightarrow [n]} \,\, \prod_{ij \in E(F)} x_{\phi(i)\phi(j)}.
\]
The curvy zonotope $Z_{(K_3,C_4,K_4\setminus e);2}$ is drawn in Figure~\ref{fig:toblerone}.
\end{definition}
\begin{figure}[htbp]
  \begin{center}
    \includegraphics[width=0.75\textwidth]{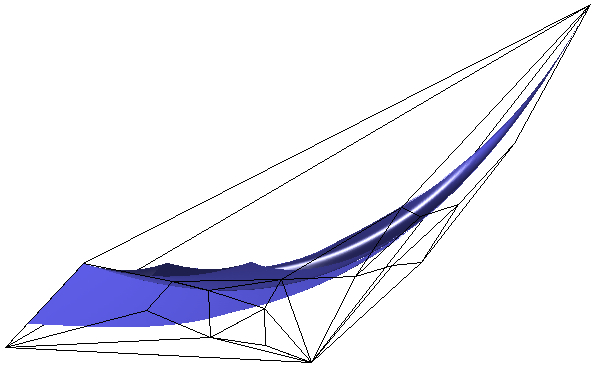}
    \caption{The curvy zonotope $Z_{(K_3,C_4,K_4\setminus e);2}$. As defined in Definition~\ref{def:cz}.}
    \label{fig:toblerone}
  \end{center}
\end{figure}

The curvy zonotope $Z_{\mathbf{F};n}$ consists of the expected values from random graphs models according to Proposition~\ref{prop:pol}, and this implies that it is a subset of the polytopes $P_{\mathbf{F};m}$. The curvy zonotopes contain the spines as the spines come from the special case of the function $W$ being constant $p$. The curvy zonotopes have many desirable properties that  the spine do not, for example full dimensionality.
\begin{theorem}\label{theorem:fullDim}
Let $\mathbf{F}=(F_1,\ldots,F_d)$ be a vector of $d$ distinct graphs with no isolated vertices and with $n_0\ge\max\{|F_1|,\ldots,|F_d|\}$. If $n\ge n_0$ then the curvy zonotope $Z_{\mathbf{F};n}$ is not contained in a hyperplane in $\mathbb{R}^d$.
\end{theorem}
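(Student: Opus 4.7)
The plan is to reduce the full-dimensionality assertion to linear independence of the polynomials $p_{F_i;n}$ in the matrix entries, and then to witness this independence via a triangular coefficient matrix.

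First I would observe that if $Z_{\mathbf{F};n}$ were contained in an affine hyperplane $\{x : \sum_i c_i x_i = c\}$, then $\sum_i c_i p_{F_i;n}(\mathbf{x})=c$ for every symmetric $\mathbf{x}\in[0,1]^{n^2}$, and since a polynomial identity holding on a set with non-empty interior holds identically, this is an identity of polynomials. Evaluating at $\mathbf{x}=0$ forces $c=0$, and because each $p_{F_i;n}$ is homogeneous of positive degree $|E(F_i)|\ge 1$ (the no-isolated-vertices hypothesis guarantees at least one edge), the problem reduces to linear independence of $p_{F_1;n},\ldots,p_{F_d;n}$ as polynomials in the $x_{ab}$.

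To establish this linear independence I would introduce a distinguishing monomial for each $F_i$. Fix, for each $F_i$, an injection $V(F_i)\hookrightarrow\{1,\ldots,|F_i|\}\subseteq[n]$ and set
\[
M_{F_i} := \prod_{\{u,v\}\in E(F_i)} x_{uv}.
\]
Expanding $p_{F_i;n}$ from its definition, the coefficient of $M_{F_j}$ equals $n^{-|F_i|}$ times the number of maps $\phi:V(F_i)\to[n]$ for which the multiset $\{\{\phi(u),\phi(v)\}:uv\in E(F_i)\}$ equals $E(F_j)$ with each edge appearing exactly once. Such a $\phi$ factors uniquely as the quotient by a simple admissible partition $\pi$ of $V(F_i)$ (no edge of $F_i$ lies inside a block and no two edges of $F_i$ map to the same block-pair) followed by a graph isomorphism $F_i/\pi \to F_j$.

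Ordering $F_1,\ldots,F_d$ by non-decreasing vertex count (breaking ties arbitrarily), three observations finish the argument. When $|V(F_j)|>|V(F_i)|$ the coefficient vanishes, since no partition of $V(F_i)$ has more than $|V(F_i)|$ blocks. When $|V(F_j)|=|V(F_i)|$ and $j\ne i$, only the trivial partition has the right number of blocks, giving $F_i/\pi=F_i$, which is not isomorphic to $F_j$ by distinctness, so the coefficient again vanishes. For $j=i$, the trivial partition contributes the $|\mathrm{Aut}(F_i)|$ self-isomorphisms, yielding a non-zero diagonal entry $|\mathrm{Aut}(F_i)|/n^{|F_i|}$. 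Hence the coefficient matrix is lower triangular with non-zero diagonal, thus non-singular, and the $p_{F_i;n}$ are linearly independent. The main obstacle is the combinatorial bookkeeping in identifying the coefficient of $M_{F_j}$ in $p_{F_i;n}$ with edge-bijective homomorphisms $F_i\to F_j$: one must check that contributions from $\phi$'s producing a diagonal variable $x_{aa}$ or a squared off-diagonal $x_{ab}^2$ cannot land in the squarefree off-diagonal monomial $M_{F_j}$, and that the surviving edge-bijective $\phi$'s are parameterized precisely by simple admissible partitions of $V(F_i)$ together with graph isomorphisms of the quotient onto $F_j$.
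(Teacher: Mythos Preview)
Your proof is correct and follows essentially the same approach as the paper's: both exploit that the monomial coming from an \emph{injective} embedding of $F$ into $[n]$ determines $F$ uniquely among graphs with at most $|F|$ vertices (here the hypotheses $n\ge n_0$ and ``no isolated vertices'' are used). The paper packages this as a contradiction argument---pick $F$ of maximal order among those with nonzero coefficient and observe that its distinguishing monomial cannot be cancelled by any other $p_{F';n}$---whereas you organize the identical observation into a lower-triangular coefficient matrix with nonzero diagonal; the two presentations are equivalent.
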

\begin{proof}
The curvy zonotope is defined in terms of polynomials coming from graphs. The idea of the proof is to identify a term in the polynomials with a special exponent. This special exponent ensure that no linear combination of the polynomials can be constant and this proves that the curvy zonotope is not contained in a hyperplane.

Let $F$ be a graph in $\mathbf{F}$. The graph $F$ give a polynomial \[p_{F;n}(\mathbf{x})\frac{1}{n^{|F|}} \sum_{\phi : V(F) \rightarrow [n]} \,\, \prod_{ij \in E(F)} x_{\phi(i)\phi(j)}\] as in the definition of the curvy zonotope $Z_{\mathbf{F};n}$.

Assume that the curvy zonotope  is in the hyperplane defined by $C_{|F_1|}p_{F_1;n}(\mathbf{x})+\cdots+C_{|F_d|}p_{F_d;n}(\mathbf{x})=C$. Let $F$ be a graph with maximal order among the graphs $F_i$ with $C_{F_i}\neq 0$.

Let $\phi:V(F)\rightarrow [n]$ be injective.  The function $\phi$ give a term $\frac{1}{n^{|F|}}\prod_{ij\in E(F)}x_{\phi(i)\phi(j)}$ in the polynomial $p_{F;n}(\mathbf{x})$. That $\phi$ is injective implies that the graph $F'$, with the indices occurring in $\prod_{ij\in E(F)}x_{\phi(i)\phi(j)}$ as vertex set and the pairs of indices occurring in $\prod_{ij\in E(F)}x_{\phi(i)\phi(j)}$ as edge set, is isomorphic to $F$.

Let $F'$ be a graph with $C_{F'}\neq 0$ and let $p_{F';n}$ contain $\frac{1}{n^{|F|}}\prod_{ij\in E(F)}x_{\phi(i)\phi(j)}$. The function $\phi$ was injective from $V(F)$ and so $|F'|$ must be at least as large as $|F|$, but $|F|$ was maximal and hence $|F|=|F'|$. The function $\phi':V(F')\rightarrow [n]$ giving the term $\frac{1}{n^{|F|}}\prod_{ij\in E(F)}x_{\phi(i)\phi(j)}$ is then injective. But it is possible to reconstruct $F$ from $\frac{1}{n^{|F|}}\prod_{ij\in E(F)}x_{\phi(i)\phi(j)}$, assuming that the function giving the term is injective. This implies that no other polynomial from a graph $F'$ with $C_{F'}$ contains the term $\prod_{ij\in E(F)}x_{\phi(i)\phi(j)}$. This ensure that no term can cancel the unique term $C_F\frac{1}{n^{|F|}}\prod_{ij\in E(F)}x_{\phi(i)\phi(j)}$ in the polynomial $C_{|F_1|}p_{F_1;n}(\mathbf{x})+\cdots+C_{|F_d|}p_{F_d;n}(\mathbf{x})$, and so the polynomial can not be constant. This is a contradiction and the curvy zonotope is not contained in a hyperplane.

\end{proof}

The curvy zonotope is not in a hyperplane, this means that there are points in it that span $d$--dimensional simplex and then all the polytopes $P_{\mathbf{F};n}$ contain this simplex and is full dimensional. This establishes the following corollary:

\begin{corollary}
Let $\mathbf{F}=(F_1,\ldots,F_d)$ be a vector of $d$ distinct graphs with no isolated vertices and with $n_0\ge\max\{|F_1|,\ldots,|F_d|\}$.  There is a constant $V_\mathbf{F}>0$ such that that the volume of $P_{\mathbf{F};n}$ is larger than $V_\mathbf{F},$
for $n\ge n_0$.
\end{corollary}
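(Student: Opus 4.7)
The plan is to build, once and for all, a $d$-dimensional simplex of positive volume that sits inside every $P_{\mathbf{F};n}$ with $n \geq n_0$, and then take $V_{\mathbf{F}}$ to be the volume of that simplex. The key input is Theorem~\ref{theorem:fullDim}, which tells us that the curvy zonotope $Z_{\mathbf{F};n_0}$ is not contained in any hyperplane of $\mathbb{R}^d$. From this I can pick $d+1$ affinely independent points $q_0, q_1, \ldots, q_d \in Z_{\mathbf{F};n_0}$. Their convex hull $\Delta$ is a $d$-simplex of positive volume, and I set $V_{\mathbf{F}} = \mathrm{Vol}(\Delta) > 0$.

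What remains is to check that the \emph{same} simplex $\Delta$ sits inside $P_{\mathbf{F};n}$ for every $n \geq n_0$. By Proposition~\ref{prop:pol} each $q_i$ equals $t(\mathbf{F}, W_{M_i})$ for some symmetric matrix $M_i \in [0,1]^{n_0 \times n_0}$. By Proposition~\ref{prop:expect} the vector $t(\mathbf{F}, W_{M_i})$ is the expectation of $t(\mathbf{F}, G)$ with $G$ drawn from the exchangeable model $\mathcal{G}(n, W_{M_i})$ (the hypothesis $n \geq n_0 \geq \max_j |F_j|$ is what makes the expectation formula applicable). Since every such $G$ is a graph on $n$ vertices, $t(\mathbf{F}, G)$ is one of the vertices used to define $P_{\mathbf{F};n}$, so the expectation lies in $P_{\mathbf{F};n}$ by convexity. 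Hence all $q_i$ lie in $P_{\mathbf{F};n}$, and convexity of $P_{\mathbf{F};n}$ then gives $\Delta \subseteq P_{\mathbf{F};n}$ and therefore $\mathrm{Vol}(P_{\mathbf{F};n}) \geq V_{\mathbf{F}}$.

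There is no real obstacle once Theorem~\ref{theorem:fullDim} is in hand; the only subtle point to state clearly is that the $q_i$ are chosen from the single set $Z_{\mathbf{F};n_0}$ but the stepfunction $W_{M_i}$ can be used as the kernel of an exchangeable model on \emph{any} number $n \geq \max_j|F_j|$ of vertices. In other words, the uniformity in $n$ comes from the fact that the same stepfunction embeds $Z_{\mathbf{F};n_0}$ into every $P_{\mathbf{F};n}$ with $n \geq n_0$, so one simplex serves for all such $n$ simultaneously.
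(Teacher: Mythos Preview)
Your proof is correct and follows the same approach as the paper: use Theorem~\ref{theorem:fullDim} to find $d+1$ affinely independent points in $Z_{\mathbf{F};n_0}$, and then observe that these points (being expectations of $t(\mathbf{F},G)$ under exchangeable models on $n$ vertices) lie in every $P_{\mathbf{F};n}$ with $n\ge n_0$, so their simplex does too. The paper compresses this into a single sentence, relying on the remark just before Theorem~\ref{theorem:fullDim} that $Z_{\mathbf{F};n_0}\subseteq P_{\mathbf{F};m}$ for all admissible $m$; you have simply spelled out that inclusion explicitly via Propositions~\ref{prop:expect} and~\ref{prop:pol}.
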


The volume of the polytopes do decrease with increasing $n$. But it is not the case that the volume can become arbitrarily small. 

\begin{figure}[htbp]
  \begin{center}
    \includegraphics[width=0.75\textwidth]{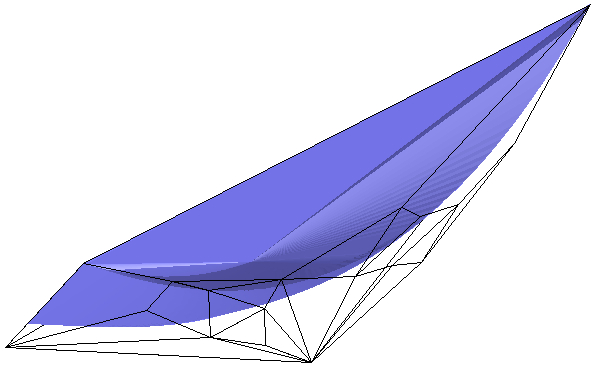}
    \caption{}
    \label{fig:convexToblerone}
  \end{center}
\end{figure}

\section{Limit object}\label{sec:Limit}
Recall that the polytopes of subgraph statistics satisfy the inclusion $P_{\mathbf{F},n}\subseteq P_{\mathbf{F},n_0}$ when $n_\le n$. This makes it possible to define a limit object.
\begin{definition}\label{definition:limit}
Let $\mathbf{F}$ be a vector of graphs where no entry have more than $n_0$ vertices. Define the \emph{limit object}
\[P_{\mathbf{F};\infty}=\bigcap_{n\ge n_0}P_{\mathbf{F};n}.\]
\end{definition}
Describing the polytopes is very hard for most $n$, but for very large $n$ the polytopes should essentially look like the limit object. Finding good descriptions of the limit objects and their properties is the goal of this section. As it turns out, the object are closely related to graph limits and the exchangeable random graph models. 

It seems almost hopeless to get a  simple description of the  polytopes $P_{\mathbf{F};n}$ for any nontrivial vector $\mathbf{F}$ and integer $n$. For the limit objects the situation is much better. Known results in extremal graph theory can be used to get good descriptions of the limit objects.

Razborov found in \cite{razborov1} the exact minimal possible triangle density for given edge density, when the number of vertices in the graph go to infinity. Razborov used machinery called flag algebras that he developed in \cite{razborov2} which is related to graph limits. If $T(p)$ is the minimal possible triangle density for edge density $p$, then $\{(p,T(p))\mid p\in[0,1]\}$ is a curve in the limit object $P_{(K_2,K_3),\infty}$. The convex hull of the curve $\{(p,T(p))\mid p\in[0,1]\}$ is the limit object $P_{(K_2,K_3),\infty}$. The convex hull of the curve has a nice explicit description: it's the hull of $\{(p,T(p))\mid p\in[0,1]\}$ is conv $\{(0,0),(1,1),(1-\frac{1}{2},0),(1-\frac{1}{3},\frac{2}{9}),\ldots,(1-\frac{1}{k},\frac{(k-1)(k-2)}{k^2}),\ldots\}$.

In this section the theory of graph limits is used to reinterpret the limit object, and to give a way to study it when an explicit description is not known.

In \cite{lovaszSzegedy} limits of certain sequences of graphs were defined. A sequence of graphs $(G_i)_{i=1,2,\ldots}$ is said to converge if the sequence $(t(F,G_i))_{i=1,2,\ldots}$ converges for every graph $F$. One of the important facts about convergent sequences of graphs is that they look like they come from exchangeable random graph models. One result in this direction from \cite{lovaszSzegedy} is the following:
\begin{theorem}[Lovasz-Szegedy \cite{lovaszSzegedy}]\label{thm:equiv}
If the sequence of graphs $(G_i)_{i=1,2,\ldots}$ converges, then there is a $W \in \mathcal{W}$ such that $t(F,W)=\lim_{i\rightarrow \infty}t(F,G_i)$ for all graphs $F$.
Let $W \in \mathcal{W}$ and let $(G_i)_{i=1,2,\ldots}$ be a sequence of random graphs generated by the exchangeable random graph model $\mathcal{G}(i,W)$, then $(G_i)_{i=1,2,\ldots}$ converges almost surely. In fact $t(F,W)=\lim_{i\rightarrow \infty}t(F,G_i)$ almost surely for all graphs $F$. 
\end{theorem}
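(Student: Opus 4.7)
The plan is to handle the two assertions of the theorem separately. The second (almost sure convergence for graphs sampled from $\mathcal{G}(i,W)$) is essentially a concentration argument, while the first (existence of a graphon limit for any convergent sequence) is the substantially harder direction and relies on the compactness of the space of graphons under the cut metric.

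For the second assertion, fix $W\in\mathcal{W}$ and a graph $F$ on $m$ vertices, and draw $G_i \sim \mathcal{G}(i,W)$. First I would verify that $\mathbb{E}[t(F,G_i)] \to t(F,W)$: conditioning on the latent variables $X_1,\ldots,X_i$, expanding $t(F,G_i)$ as an average over maps $V(F)\to[i]$, and invoking Proposition~\ref{prop:expect}, one obtains $\mathbb{E}[t(F,G_i)] = t(F,W) + O(1/i)$, the error coming from non-injective maps. For concentration I would use the vertex-exposure martingale obtained by revealing $X_1,\ldots,X_i$ one at a time: resampling a single $X_k$ alters $t(F,G_i)$ by at most $O(1/i)$, since only the $O(1/i)$ fraction of copies of $F$ that touch vertex $k$ is affected. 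Azuma's inequality then yields exponential concentration, and Borel--Cantelli delivers almost sure convergence. A single null set handles all graphs $F$ simultaneously since there are only countably many.

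For the first assertion the key tool is the cut metric on graphons,
\[ d_\square(U,W) \;=\; \inf_{\phi} \sup_{S,T \subseteq [0,1]} \left| \int_{S \times T} \bigl( U(x,y) - W(\phi(x),\phi(y)) \bigr) \, \mathrm{d}x\,\mathrm{d}y \right|, \]
where $\phi$ ranges over measure-preserving bijections of $[0,1]$. Each finite graph $G$ is identified with its step-function graphon $W_G \in \mathcal{W}$ in the sense of Proposition~\ref{prop:pol}. My plan is to establish (a) the quotient space $(\mathcal{W}/{\sim},d_\square)$ is compact; and (b) a counting lemma: $d_\square(W_n,W)\to 0$ implies $t(F,W_n)\to t(F,W)$ for every fixed graph $F$. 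Given these, any convergent sequence $(G_i)$ has a $d_\square$-subsequential limit $W$, and (b) forces $t(F,W) = \lim_i t(F,G_i)$ for every $F$, so the identified limit does not depend on the choice of subsequence.

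The principal obstacle is (a). I would prove it via a weak (Frieze--Kannan style) regularity lemma: every $W\in\mathcal{W}$ is approximable in cut norm by a step-function graphon with at most $k(\varepsilon)$ blocks, where $k(\varepsilon)$ depends only on $\varepsilon$. Granted this, take an arbitrary sequence $(W_n)\subset\mathcal{W}$, approximate each $W_n$ at precision $1/m$ by a stepfunction with bounded combinatorial type and values in a finite $1/m$-net of $[0,1]$, extract a subsequence along which these approximants coincide as labelled stepfunctions for each $m$, and diagonalize to obtain a Cauchy subsequence in $d_\square$. For step (b) I would telescope along the edges of $F$: swapping one factor $W_n(x_i,x_j)$ for $W(x_i,x_j)$ at a time contributes at most $\|W_n-W\|_\square$ per edge, yielding $|t(F,W_n) - t(F,W)| \le e(F) \cdot d_\square(W_n,W)$ after optimizing over $\phi$. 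Combining (a) and (b), and using $t(F,W_{G_i}) = t(F,G_i) + O(1/i)$, produces the desired $W$. The weak regularity lemma underlying compactness is the substantive input; everything else is standard concentration and bookkeeping.
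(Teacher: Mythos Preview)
The paper does not supply its own proof of this statement: Theorem~\ref{thm:equiv} is quoted from Lov\'asz--Szegedy, and the only commentary is the single sentence ``The main ingredient in the proof is a method to pick a subsequence of $(G_i)_{i=1,2,\ldots}$ with Szemer\'edi regularity properties. Using this subsequence it is possible to construct a suitable $W$.'' Your outline is a correct and reasonably complete sketch of the standard Lov\'asz--Szegedy argument, and in particular your use of the weak Frieze--Kannan regularity lemma to obtain compactness in the cut metric is precisely the ``Szemer\'edi regularity'' ingredient the paper alludes to; so in spirit you are following the same route, only in far more detail than the paper gives.

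Two small technical remarks, neither a real gap. First, in the concentration step your vertex-exposure martingale reveals only the latent $X_k$'s, but $t(F,G_i)$ also depends on the independent edge coin-flips; you should expose each $X_k$ together with the edges incident to vertex $k$ (or simply apply McDiarmid's bounded-differences inequality to all inputs at once). The Lipschitz bound and the conclusion are unchanged. Second, in your compactness argument you extract a $d_\square$-Cauchy subsequence but do not say how the limiting $W$ is produced; in the usual proof one arranges the stepfunction approximants to be nested refinements and passes to an $L^1$ (martingale) limit, which then serves as the limit graphon. With those two points tidied up, your plan goes through.
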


The main ingredient in the proof is a method to pick a subsequence of $(G_i)_{i=1,2,\ldots}$ with Szemer\'edi regularity properties. Using this subsequence it is possible to construct a suitable $W$.

There is a useful norm on the space of functions $[0,1]^2\rightarrow \mathbb{R}$.
\begin{definition}
The \emph{rectangle norm} of a function $W:[0,1]^2\rightarrow \mathbb{R}$ is defined by
\[||W||_\square=\sup_{A\subseteq [0,1]^2}\int_AW(x_1,x_2)\mathrm{d}x_1\mathrm{d}x_2.\]
\end{definition}

In the previous section we calculated with stepfunctions. In more general situations it might be hard to get exact results, but it is possible to approximate any measurable by a stepfunction.
\begin{lemma}\label{lemma:approxWithStepfunction}
For any vector $\mathbf{F}$ of $d$ graphs with at most $e$ edges, and any $W \in \mathcal{W}$, there is a stepfunction $U$ with at most $2^{k_d \varepsilon^{-2}}$ steps (all of the same width) so that
\[ | t(\mathbf{F},W)-t(\mathbf{F},U) | < de \varepsilon. \]
Here $k_d$ is a fixed constant depending on $d$.
\end{lemma}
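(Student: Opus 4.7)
The plan is to combine the Frieze--Kannan weak regularity lemma with a standard counting lemma in the rectangle norm, then sum over the $d$ graphs in $\mathbf{F}$. First I would invoke weak regularity to produce an equipartition of $[0,1]$ into $N\leq 2^{c/\varepsilon^2}$ intervals of equal length such that the stepfunction $U$ obtained by averaging $W$ over each block $I_i\times I_j$ satisfies $\|W-U\|_\square\leq \varepsilon$. The absolute constant $c$ can be absorbed into the $k_d$ of the statement, giving the claimed bound on the number of steps.

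Next I would establish the per-graph bound $|t(F,W)-t(F,U)|\leq e_F\|W-U\|_\square$ by a telescoping argument. Writing $Z=W-U$ and enumerating the edges of $F$ as $f_1,\ldots,f_{e_F}$,
\[
\prod_{k=1}^{e_F} W_{f_k} - \prod_{k=1}^{e_F} U_{f_k}
= \sum_{k=1}^{e_F}\Bigl(\prod_{l<k} U_{f_l}\Bigr) Z_{f_k} \Bigl(\prod_{l>k} W_{f_l}\Bigr),
\]
where $W_{f_k}$ abbreviates $W(x_a,x_b)$ for $f_k=\{a,b\}$, and analogously for $U$ and $Z$. For the $k$th summand, fix the endpoints $a,b$ of $f_k$ and integrate the remaining vertex variables first. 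Every other edge of $F$ touches at most one of $a,b$, so after this partial integration it contributes a factor depending on at most one of $x_a,x_b$ and lying in $[0,1]$ (since $W,U\in[0,1]$). The residual integrand thus has the product form $\alpha(x_a)\beta(x_b)Z(x_a,x_b)$ with $\alpha,\beta\colon [0,1]\to[0,1]$. A layer-cake decomposition $\alpha(x)=\int_0^1\mathbf{1}_{\alpha(x)>s}\,ds$ and $\beta(y)=\int_0^1\mathbf{1}_{\beta(y)>t}\,dt$, followed by Fubini, rewrites the remaining two-dimensional integral as a double average of $\int_{S_s\times T_t}Z$ over measurable sets $S_s,T_t\subseteq[0,1]$, each controlled by the rectangle norm. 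Summing the $e_F$ telescoped terms yields $e_F\,\varepsilon$.

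Finally I would combine the $d$ per-graph bounds by the triangle inequality on the vector $t(\mathbf{F},\cdot)\in\mathbb{R}^d$: with each of the $d$ coordinates differing by at most $e\varepsilon$, the sum of coordinatewise deviations is at most $de\varepsilon$, yielding the bound stated in the lemma.

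The main obstacle is the counting-lemma step. One must verify carefully that after fixing the swapped edge the remaining integrations really factor into $\alpha(x_a)\beta(x_b)$, and that the paper's one-sided definition of $\|\cdot\|_\square$ supports an absolute-value bound against integrals over product sets. Any constant factor lost in this reduction can be absorbed by rerunning the regularity lemma with $\varepsilon$ replaced by a constant multiple, at the cost only of enlarging the constant $k_d$.
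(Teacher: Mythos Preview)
Your overall strategy---Frieze--Kannan weak regularity to get $\|W-U\|_\square<\varepsilon$ with at most $2^{k\varepsilon^{-2}}$ equal-width steps, then the counting inequality $|t(F,W)-t(F,U)|\le |E(F)|\cdot\|W-U\|_\square$, then sum over the $d$ coordinates---is exactly what the paper does; it simply cites Lemma~4.1 of Lov\'asz--Szegedy for the counting step rather than reproving it.

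Your reproof of that counting step, however, has a genuine gap at the factorization. You write that after integrating out the vertex variables other than $x_a,x_b$, ``the residual integrand has the product form $\alpha(x_a)\beta(x_b)Z(x_a,x_b)$.'' This is false in general: take $F=K_3$ with $f_k=\{1,2\}$; integrating out $x_3$ leaves $\int_0^1 U(x_1,x_3)W(x_2,x_3)\,dx_3$, which is a function of $(x_1,x_2)$ that need not factor, and for a general $g(x_a,x_b)\in[0,1]$ the quantity $\int Z\cdot g$ is \emph{not} controlled by $\|Z\|_\square$. The correct maneuver (and the one Lov\'asz--Szegedy actually use) reverses the order of operations: \emph{fix} the variables $x_c$ for $c\ne a,b$ first. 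For each such frozen choice the product $\prod_{l\ne k}W^{(l)}_{f_l}$ genuinely factors as $\alpha(x_a)\beta(x_b)\gamma$ with $\alpha,\beta,\gamma\in[0,1]$, because in a simple graph no other edge joins $a$ to $b$; then $\bigl|\int Z(x_a,x_b)\alpha(x_a)\beta(x_b)\,dx_a\,dx_b\bigr|\le\|Z\|_\square$ (your layer-cake reduction to rectangles is fine here), and finally one integrates the remaining variables, using that this bound is uniform in them. With that reordering your argument goes through, and your remark about absorbing the constant from the paper's one-sided norm into $k_d$ is adequate.
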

\begin{proof}
This is essentially a straightforward application of the weak Szemer\'edi regularity lemma contained in \cite{friezeKannan}, as explained in \cite{lovaszSzegedy}.
There is a constant $k$, so that for any $\varepsilon>0$  and $W \in \mathcal{W}$, there exists a stepfunction $U$ (with all steps of the same width) and at most $2^{k \varepsilon^{-2}}$ steps, so that $||U-W||_{\square} < \varepsilon.$ The rectangle norm used, relates to the fact, that, according to Lemma 4.1 of \cite{lovaszSzegedy},
$|t(F,U)-t(F,W)|\leq |E(F)| ||U-W||_{\square}$.
\end{proof}

One part of the proof of Theorem~\ref{thm:equiv} uses the following very useful result.
\begin{lemma}[Lovasz-Szegedy \cite{lovaszSzegedy}]\label{lemma:convergentGraphs}
Let $(G_i)_{i \geq 1}$ be a sequence of graphs. Then there is an infinite subsequence $(G_i)_{i\geq 1, i\in S}$ and a 
$W \in \mathcal{W}$ so that 
\[ \lim_{i \rightarrow \infty, i\in S} t(F,G_i) \]
exists for all graphs $F$, and it equals $t(F,W)$.
\end{lemma}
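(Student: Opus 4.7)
The plan is to combine a diagonal extraction on subgraph densities with the stepfunction approximation of Lemma~\ref{lemma:approxWithStepfunction} to build a graphon $W\in\mathcal{W}$ whose densities realize the limits along a subsequence. Since the set of isomorphism classes of finite graphs is countable, enumerate it as $F^{(1)},F^{(2)},\ldots$. Each sequence $(t(F^{(k)},G_i))_{i\ge 1}$ takes values in the compact interval $[0,1]$, so a standard diagonal extraction yields an infinite subsequence $S\subseteq\mathbb{N}$ for which $\tau_k:=\lim_{i\to\infty,\,i\in S} t(F^{(k)},G_i)$ exists for every $k$. It then suffices to exhibit $W\in\mathcal{W}$ with $t(F^{(k)},W)=\tau_k$ for all $k$.

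To produce $W$, identify each $G_i$ with its $\{0,1\}$-valued step graphon $W_{G_i}$. For every integer $m\ge 1$, apply Lemma~\ref{lemma:approxWithStepfunction} with $\varepsilon=1/m$ to obtain a stepfunction $U_{i,m}$ with at most $N_m$ equal-width steps (where $N_m$ depends only on $m$) and $\|W_{G_i}-U_{i,m}\|_\square<1/m$. Each such $U_{i,m}$ is encoded by a symmetric $N_m\times N_m$ matrix in the compact cube $[0,1]^{N_m^2}$. A further countable diagonalization refining $S$ (over the index $m$) gives an infinite subsequence, still called $S$, such that for every fixed $m$ the matrices defining $U_{i,m}$ converge entrywise along $S$ to a symmetric matrix $M_m$. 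Let $U_m^\ast$ be the stepfunction with entries $M_m$; because $U_{i,m}$ and $U_m^\ast$ live on the same partition, entrywise convergence gives $\|U_{i,m}-U_m^\ast\|_1\to 0$ and hence $\|U_{i,m}-U_m^\ast\|_\square\to 0$ along $S$.

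Now I identify a limit $W$ of the $U_m^\ast$. The triangle inequality gives, for all $i\in S$,
\[ \|U_{i,m}-U_{i,m'}\|_\square \leq \|U_{i,m}-W_{G_i}\|_\square + \|W_{G_i}-U_{i,m'}\|_\square < \tfrac{1}{m}+\tfrac{1}{m'}, \]
and passing to the limit in $i$ along $S$ yields $\|U_m^\ast-U_{m'}^\ast\|_\square \leq \tfrac{1}{m}+\tfrac{1}{m'}$. Thus $(U_m^\ast)_m$ is Cauchy in $\|\cdot\|_\square$; invoking the completeness of $\mathcal{W}$ under the cut norm (for uniformly bounded graphons) one obtains a limit $W\in\mathcal{W}$ with $\|U_m^\ast-W\|_\square\leq 2/m$. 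Then, using the Lov\'asz-Szegedy estimate $|t(F,A)-t(F,B)|\leq |E(F)|\,\|A-B\|_\square$ already invoked in Lemma~\ref{lemma:approxWithStepfunction}, for any graph $F$ with $e$ edges
\[ |t(F,G_i)-t(F,W)| \leq e\bigl(\|W_{G_i}-U_{i,m}\|_\square + \|U_{i,m}-U_m^\ast\|_\square + \|U_m^\ast-W\|_\square\bigr); \]
sending $i\to\infty$ along $S$ kills the middle term and bounds the first by $e/m$, and then $m\to\infty$ finishes. This forces $t(F,W)=\lim_{i\in S}t(F,G_i)=\tau_k$, as required.

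The main obstacle is the construction of the limit $W$: the stepfunctions $U_m^\ast$ for different $m$ live on unrelated partitions of $[0,1]^2$, so Cauchyness is available only in the weak rectangle norm, not in $L^1$. Producing a single $W\in\mathcal{W}$ in the cut-norm topology genuinely requires the completeness/compactness theory for graphons modulo measure-preserving re-labellings, which in turn rests on the same Szemer\'edi-type regularity input that powers Lemma~\ref{lemma:approxWithStepfunction}; this is the technical core of the argument.
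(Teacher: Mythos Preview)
The paper does not supply its own proof of this lemma; it is quoted as a black box from Lov\'asz--Szegedy and then invoked in Theorem~\ref{theorem:allW}. Your outline is essentially the Lov\'asz--Szegedy argument (diagonal extraction plus weak regularity), and it is correct.

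Your final paragraph, however, overstates the difficulty of the step you flag. You do \emph{not} need the compactness of the graphon space modulo measure-preserving relabellings; you only need that a sequence in $\mathcal{W}$ which is Cauchy for $\|\cdot\|_\square$ has a $\|\cdot\|_\square$-limit in $\mathcal{W}$, and this is elementary functional analysis. Since $L^1([0,1]^2)$ is separable, the closed unit ball of $L^\infty$ is weak-$\ast$ sequentially compact, so some subsequence $(U_{m_k}^\ast)$ converges weak-$\ast$ to a $W$; the conditions $0\le W\le 1$ and symmetry are weak-$\ast$ closed, so $W\in\mathcal{W}$. For fixed $m$ and any measurable $S,T\subseteq[0,1]$,
\[ \Bigl| \int_{S\times T}(U_m^\ast - W) \Bigr| = \lim_{k} \Bigl| \int_{S\times T}(U_m^\ast - U_{m_k}^\ast) \Bigr| \le \limsup_k\Bigl(\tfrac{1}{m}+\tfrac{1}{m_k}\Bigr)=\tfrac{1}{m}, \]
so $\|U_m^\ast - W\|_\square \le 1/m$. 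No further Szemer\'edi input is needed beyond what you already used to produce the $U_{i,m}$. (Lov\'asz--Szegedy carry out the analogous step via a martingale convergence argument on nested regularity partitions rather than weak-$\ast$ compactness; either route works.)

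Two small points. First, Lemma~\ref{lemma:approxWithStepfunction} as \emph{stated} only bounds $|t(\mathbf{F},W)-t(\mathbf{F},U)|$ for a fixed finite vector $\mathbf{F}$; you are really using the stronger fact $\|W-U\|_\square<\varepsilon$ from its proof, i.e.\ the weak regularity lemma itself. Second, the lemma only guarantees ``at most $2^{c\,m^2}$'' equal-width steps, and the actual number may vary with $i$; to run your matrix-compactness argument cleanly you should first refine each $U_{i,m}$ to a single common equipartition (for instance into $\bigl(2^{c\,m^2}\bigr)!$ parts).
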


The limit object, the intersection of polytopes of subgraph statistics, is related to the graph limits of Lovasz and Szegedy. The limit object is the convex hull of expectation values.
\begin{theorem}\label{theorem:allW}
For every vector $\mathbf{F}$ of graphs, 
$P_{\mathbf{F};\infty} = \mathrm{conv}\,  \{   t(\mathbf{F},W)  \mid W \in \mathcal{W} \}.$
\end{theorem}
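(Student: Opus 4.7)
The plan is to prove the two set-inclusions separately, relying on Proposition~\ref{prop:inclusion} and Theorem~\ref{thm:equiv} for one direction, and on Carath\'eodory's theorem together with Lemma~\ref{lemma:convergentGraphs} for the other.

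For the inclusion $\mathrm{conv}\{t(\mathbf{F},W) \mid W \in \mathcal{W}\} \subseteq P_{\mathbf{F};\infty}$, note that $P_{\mathbf{F};\infty}$ is convex, being an intersection of convex sets, so it suffices to place each individual point $t(\mathbf{F},W)$ into $P_{\mathbf{F};\infty}$. Given $W \in \mathcal{W}$, I would sample $G_i \sim \mathcal{G}(i,W)$; by Theorem~\ref{thm:equiv} one may fix a realization along which $t(\mathbf{F},G_i) \to t(\mathbf{F},W)$. For any fixed $n \geq n_0$, Proposition~\ref{prop:inclusion} gives $t(\mathbf{F},G_i) \in P_{\mathbf{F};i} \subseteq P_{\mathbf{F};n}$ for all $i \geq n$, and since $P_{\mathbf{F};n}$ is a closed polytope the limit point $t(\mathbf{F},W)$ also lies in $P_{\mathbf{F};n}$. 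As $n$ was arbitrary, $t(\mathbf{F},W) \in P_{\mathbf{F};\infty}$.

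The reverse inclusion is the delicate direction. Take $x \in P_{\mathbf{F};\infty} \subset \mathbb{R}^d$. For every $n \geq n_0$, Carath\'eodory inside $P_{\mathbf{F};n}$ produces a representation
\[ x = \sum_{k=1}^{d+1} \lambda_k^{(n)} \, t(\mathbf{F}, G_k^{(n)}), \]
with $G_k^{(n)}$ a graph on $n$ vertices and $(\lambda_1^{(n)},\ldots,\lambda_{d+1}^{(n)})$ in the standard simplex. The simplex is compact, so a first subsequence makes the weights converge to some $(\lambda_1,\ldots,\lambda_{d+1})$. Then I would apply Lemma~\ref{lemma:convergentGraphs} successively for $k=1,\ldots,d+1$, extracting at each step a further subsequence along which the graph sequence $(G_k^{(n)})_n$ converges, in the graph-limit sense, to some $W_k \in \mathcal{W}$; in particular $t(\mathbf{F},G_k^{(n)}) \to t(\mathbf{F},W_k)$. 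Passing to the limit in the Carath\'eodory identity yields
\[ x = \sum_{k=1}^{d+1} \lambda_k \, t(\mathbf{F},W_k) \in \mathrm{conv}\{t(\mathbf{F},W) \mid W \in \mathcal{W}\}. \]

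The main technical obstacle is precisely this second direction: one must juggle $d+1$ simultaneously varying sequences of graphs whose orders grow, and find a common subsequence along which every one of them converges in an analytically useful sense. Lemma~\ref{lemma:convergentGraphs} is the compactness statement that makes this possible, replacing an a priori wild family of arbitrarily large graphs by subsequential limits inside $\mathcal{W}$; after that, continuity of the evaluation $W \mapsto t(\mathbf{F},W)$ along convergent graph sequences closes the argument with no further geometric input.
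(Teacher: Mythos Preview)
Your proposal is correct and follows essentially the same strategy as the paper: for $P_{\mathbf{F};\infty}\subseteq\mathrm{conv}\{t(\mathbf{F},W)\}$ both you and the paper write $x$ as a convex combination of $d+1$ graph points in each $P_{\mathbf{F};n}$ and then iterate Lemma~\ref{lemma:convergentGraphs} together with compactness of the simplex to pass to the limit. For the reverse inclusion your direct argument (closedness of each $P_{\mathbf{F};n}$ plus convexity of the intersection) is in fact a bit cleaner than the paper's, which reaches the same conclusion via a separating-hyperplane contradiction.
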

\begin{proof}
Denote with $P_{\mathbf{F};\mathcal{W}}$ the convex hull defined in the theorem statement.

First we prove that any $x\in P_{\mathbf{F};\infty}$ is in $P_{\mathbf{F};\mathcal{W}}$. By definition
$P_{\mathbf{F};\infty}$ is the intersection of the $d$-dimensional polytopes $P_{\mathbf{F};n}$
for $n$ is larger than some $n_0$. By triangulating the interior of every $P_{\mathbf{F};n}$
using only boundary vertices, we can describe $x$ in a simplex for every $n > n_0:$
\[ x= \sum_{j=0}^d \alpha_{j,n}v_{j,n} \]
where each $v_{j,n}$ is a vertex of $P_{\mathbf{F};n}$, each $\alpha_{j,n}\geq 0$, and $\sum_{j=0}^d \alpha_{j,n} = 1.$
For every vertex in a $P_{\mathbf{F};n}$ there is a graph realizing that subgraph statistics.
Choose  $d+1$ graph sequences $(G^j_n)^{0\leq i \leq d}_{n > n_0}$ so that 
$t(\mathbf{F},G^j_n)=v_{j,n}$. 

Use Lemma~\ref{lemma:convergentGraphs} on $(G^0_n)_{n>n_0}$ to find an index set $S_0$ and a symmetric
measurable function $W_0 : [0,1]^2 \rightarrow [0,1]$ so that 
$ \lim_{n \rightarrow \infty, n\in S_0} t(F,G_n^0) $
exists for all graphs $F$, and it equals $t(F,W_0)$.

For $j=1,2,\ldots, d$ repeat the use Lemma~\ref{lemma:convergentGraphs}, but on $(G^j_n)_{n\in S_{j-1}}$ to find an index set $S_j$ and a symmetric measurable function $W_j : [0,1]^2 \rightarrow [0,1]$ so that 
$ \lim_{n \rightarrow \infty, n\in S_j} t(F,G_n^j) $
exists for all graphs $F$, and it equals $t(F,W_j)$.

The points $t(\mathbf{F},W_0), \ldots, t(\mathbf{F},W_d)$ spans (a possibly degenerate) simplex in $P_{\mathbf{F};\mathcal{W}}$
and we should locate $x$ in it. The point $(\alpha_{0,n}, \alpha_{1,n}, \ldots, \alpha_{d,n})$ is in a $d$-simplex for all $n$, and
hence there is a subsequence $S'\subset S_d$ so that $\lim_{n \rightarrow \infty, n\in S'} \alpha_{j,n} = \tilde\alpha_j$.

Collecting the preceding limits, gives the desired
\[ x= \sum_{j=0}^d \tilde\alpha_j  t(\mathbf{F},W_j). \]

Now we prove the other direction. Pick a point $x$ in $P_{\mathbf{F};\mathcal{W}}$ parametrized by
\[ x= \sum_{j=0}^d \alpha_j  t(\mathbf{F},W_j) \]
and assume that $x\not\in P_{\mathbf{F};\infty}$. Since $P_{\mathbf{F};\infty}$ is a closed convex set
there is a hyperplane $H$ strictly separating $x$ and $P_{\mathbf{F};\infty}$.
By definition $P_{\mathbf{F};\infty}$ is the intersection of  $P_{\mathbf{F};n_0} \supset P_{\mathbf{F};n_0+1}
\supset  P_{\mathbf{F};n_0+2} \supset \cdots$ for some $n_0$.

Assume that all polytopes $P_{\mathbf{F};n}$ intersect the hyperplane $H$, and pick a convergent sequence $\{x_i\}_{i=n_0,n_0+1,\ldots}$ such that $x_i\in P_{\mathbf{F};i}\cap H$. The sequence $\{x_i\}_{i=n_0,n_0+1,\ldots}$ converge to something in $H$ and the  polytopes are compact, this implies that the limit of  $\{x_i\}_{i=n_0,n_0+1,\ldots}$ is in all polytopes $P_{\mathbf{F};i}$. This is a contradiction, $H$ will not intersect all $P_{\mathbf{F};n}$. For $n_1$ large enough, the hyperplane $H$ will not intersect $P_{\mathbf{F};n_1}$, and $x$ and $P_{\mathbf{F};n_1}$ are separated by $H$.

Generate random graphs $G_{n}^j \in \mathcal{G}(n,W_j)$ for $n\geq n_1$. According to Lemma~\ref{lemma:convergentGraphs},
$\lim_{n\rightarrow \infty} t(\mathbf{F},G^j_n)$ exists and converges to $t(\mathbf{F},W_j)$. Note that
all of the points $t(\mathbf{F},G^j_n)$ are in the closed convex set $P_{\mathbf{F};n_1}$, and hence
so is all their limits $t(\mathbf{F},W_j)$. But then $x= \sum_{j=0}^d \alpha_j  t(\mathbf{F},W_j)$
should also be in $P_{\mathbf{F};n_1}$, which contradicts that $x$ and $P_{\mathbf{F};n_1}$
are separated by a hyperplane.
\end{proof}

The curvy zonotopes can also be used to approximate the limit object arbitrary well.
\begin{theorem}
Let $\mathbf{F}$ be a vector of $d$ graphs on at most $e$ edges. If $\varepsilon > 0$ and $n>  2^{                 k_d   \frac{d^2e^2}{\varepsilon^2}}$ then
\[  0 \leq  \mathrm{Vol}( P_{\mathbf{F} ; \infty})  -  \mathrm{Vol}(  \mathrm{conv}\,  Z_{\mathbf{F};n}  ) < \varepsilon. \] Here $k_d$ is a constant depending only on $d$.
\end{theorem}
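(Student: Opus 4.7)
The plan is to combine three ingredients already established in the paper: Theorem~\ref{theorem:allW}, which identifies $P_{\mathbf{F};\infty}$ with $\mathrm{conv}\,\{t(\mathbf{F},W) \mid W \in \mathcal{W}\}$; the weak Szemer\'edi approximation Lemma~\ref{lemma:approxWithStepfunction}, which replaces an arbitrary $W$ by an equal-width stepfunction with few steps; and the convex-hull volume estimate Lemma~\ref{lemma:volumeConvergingConvexHulls}. I will apply the last lemma with $A = Z_{\mathbf{F};n}$ and $B = \{t(\mathbf{F},W) \mid W \in \mathcal{W}\}$, so that $\mathrm{conv}\,A = \mathrm{conv}\,Z_{\mathbf{F};n}$ and $\mathrm{conv}\,B = P_{\mathbf{F};\infty}$. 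The inclusion $A\subseteq B$ is immediate since every point of $Z_{\mathbf{F};n}$ is $t(\mathbf{F},W_M)$ for a stepfunction $W_M \in \mathcal{W}$, and this already supplies the trivial lower bound $0 \leq \mathrm{Vol}(P_{\mathbf{F};\infty}) - \mathrm{Vol}(\mathrm{conv}\,Z_{\mathbf{F};n})$.

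The technical core is the approximation: for every $W \in \mathcal{W}$ I want to exhibit a symmetric $n \times n$ matrix $M$ with entries in $[0,1]$ making $|t(\mathbf{F},W) - t(\mathbf{F},W_M)|$ small. First, apply Lemma~\ref{lemma:approxWithStepfunction} with a parameter $\delta$ to obtain a stepfunction $U$ on $m$ equal-width intervals, $m \leq 2^{k'_d/\delta^2}$, with $|t(\mathbf{F},W) - t(\mathbf{F},U)| < de\delta$. Next, refine $U$ to an $n$-step stepfunction $U^{(n)}$ by assigning to each $1/n$-cell the value of $U$ at a fixed interior point of that cell; the resulting $U^{(n)}$ has the form $W_{M^{(n)}}$ for a symmetric $n \times n$ matrix, so $t(\mathbf{F}, U^{(n)}) \in Z_{\mathbf{F};n}$. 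A $1/n$-cell can disagree with $U$ only if it straddles a boundary of the $1/m$-partition, and an elementary count bounds the total area of such cells by $2m/n$, giving $\|U - U^{(n)}\|_\square \leq 2m/n$.

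Applying Lemma~4.1 of \cite{lovaszSzegedy}, $|t(F,U)-t(F,V)| \leq |E(F)|\,\|U-V\|_\square$, coordinatewise for the $d$ entries of $\mathbf{F}$ yields $|t(\mathbf{F},U) - t(\mathbf{F},U^{(n)})| \leq 2dem/n$, and the triangle inequality produces
\[|t(\mathbf{F},W) - t(\mathbf{F},U^{(n)})| < de\delta + 2dem/n.\]
Choose $\delta$ proportional to $\varepsilon/(de)$ so that the first summand is at most a fixed fraction of $\varepsilon$; the corresponding $m$ is at most $2^{k'' d^2 e^2/\varepsilon^2}$ for some constant $k''$. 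Then demanding $n > 2^{k_d d^2 e^2/\varepsilon^2}$ with $k_d$ slightly larger forces $2dem/n$ to be small as well, so each $b \in B$ lies within the required distance of some $a \in A$, and Lemma~\ref{lemma:volumeConvergingConvexHulls} delivers $\mathrm{Vol}(P_{\mathbf{F};\infty}) - \mathrm{Vol}(\mathrm{conv}\,Z_{\mathbf{F};n}) < \varepsilon$.

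The main obstacle is the refinement step: for arbitrary $n$ not a multiple of $m$ one cannot realize $U$ exactly as an $n$-step stepfunction, and the rectangle-norm estimate $\|U - U^{(n)}\|_\square \leq 2m/n$ is precisely what forces the threshold on $n$ to grow like $2^{\mathrm{poly}(d,e)/\varepsilon^2}$ as in the statement. What remains is bookkeeping of the various $d$-dependent constants produced by Lemmas~\ref{lemma:approxWithStepfunction} and~\ref{lemma:volumeConvergingConvexHulls} so that they can be absorbed into the single constant $k_d$ appearing in the theorem.
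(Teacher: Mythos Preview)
Your proposal is correct and follows the same three-step approach as the paper: Theorem~\ref{theorem:allW} to identify $P_{\mathbf{F};\infty}$ with $\mathrm{conv}\{t(\mathbf{F},W)\mid W\in\mathcal{W}\}$, Lemma~\ref{lemma:approxWithStepfunction} to approximate each $t(\mathbf{F},W)$ by a stepfunction value, and Lemma~\ref{lemma:volumeConvergingConvexHulls} to convert pointwise proximity into a volume bound. You are in fact more careful than the paper's own proof, which tacitly treats the $m$-step stepfunction produced by Lemma~\ref{lemma:approxWithStepfunction} as already lying in $Z_{\mathbf{F};n}$; your extra refinement $U\to U^{(n)}$ with the rectangle-norm estimate $\|U-U^{(n)}\|_\square \le 2m/n$ and the ensuing bookkeeping fill exactly that gap.
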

\begin{proof}
Any point in the limit object $P_{\mathbf{F},\infty}$ is in the convex hull of
\[ \{   t(\mathbf{F},W)  \mid W:[0,1]^2 \rightarrow [0,1] \textrm{ symmetric and measurable} \},  \]
 by Theorem~\ref{theorem:allW}, and the points of $Z_{\mathbf{F},n}$ is a subset of those,
since they  are all on the form $t(\mathbf{F},U) $ for some stepfunction $U$ with $n$ steps.
According to Lemma~\ref{lemma:approxWithStepfunction} any $t(\mathbf{F},W)$ is within distance
$k_d^{-1}\varepsilon$ of a point in the curvy zonotope $Z_{\mathbf{F},n}$, and then according
to Lemma~\ref{lemma:volumeConvergingConvexHulls} we are done.
\end{proof}

\section{Conjectures about the limit object $P_{\mathbf{F};\infty}$.}\label{sec:Conjectures}

We have previously inscribed cyclic polytopes in the limit objects. We will now define another cyclic polytope and conjecture that a particular class of limit objects actually are cyclic polytopes.

The vertices of $P_{(K_2,K_n);\infty}$ are given by the limits of complete $k$-equipartite graphs according to results by Bollobas \cite{Bol1,Bol2}. It is not hard to see that $P_{(K_2,K_n);\infty}$ is a cyclic polytope, and we believe that this is true in a more general setting.

For positive integers $e_1< e_2 < \ldots < e_m$ define the \emph{tail} $s^{\mathbf{e}} :[0,1] \rightarrow [0,1]^m $ by
\[ s^{\mathbf{e}}_i(x) = \prod_{j=1}^{e_i-1} (1-jx). \]
\begin{proposition}
The convex hull of any finite set of points on a tail is a cyclic polytope.
\end{proposition}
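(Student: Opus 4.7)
My plan is to reduce to the Vandermonde-type non-vanishing characterization of moment curves, following the template that Ziegler~\cite{ziegler} uses for the ordinary moment curve and that the proof of Theorem~\ref{theorem:volumeConvexHullSpine} already recycled for the spine. It suffices to check that for any distinct parameters $x_0<\cdots<x_m$ chosen from the finite set, the $(m{+}1)\times(m{+}1)$ augmented determinant
\[
D(x_0,\ldots,x_m) \;=\; \det\!\begin{pmatrix}
1 & \cdots & 1 \\
s^{\mathbf{e}}_1(x_0) & \cdots & s^{\mathbf{e}}_1(x_m) \\
\vdots & & \vdots \\
s^{\mathbf{e}}_m(x_0) & \cdots & s^{\mathbf{e}}_m(x_m)
\end{pmatrix}
\]
is nonzero; continuity of $D$ on the connected region $\{x_0<\cdots<x_m\}$ then automatically forces a consistent sign, which yields Gale's evenness and thus the cyclic polytope structure.

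The next step is to compute $D$ by factoring the matrix as $C\cdot V$, where $V$ is the $e_m\times(m{+}1)$ ordinary Vandermonde $V_{k,j}=x_j^k$ and $C$ is the $(m{+}1)\times e_m$ coefficient matrix whose row $0$ equals $(1,0,\ldots,0)$ and whose row $i\geq 1$ is the monomial expansion $C_{i,k}=(-1)^k e_k(1,2,\ldots,e_i-1)$ of $s^{\mathbf{e}}_i$. Cauchy--Binet gives $D=\sum_S \det(C_S)\det(V_S)$ over $(m{+}1)$-element subsets $S\subseteq\{0,1,\ldots,e_m-1\}$, where each $\det(V_S)$ is a generalized Vandermonde equal to a Schur polynomial times $V(x_0,\ldots,x_m)$ and is therefore strictly positive for $0<x_0<\cdots<x_m$. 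The staircase structure of $C$ forces $0\in S$ and $s_l\le e_l-1$, and the leading choice $S^\star=\{0,e_1-1,\ldots,e_m-1\}$ contributes a nonvanishing term scaled by $\prod_{i=1}^m (-1)^{e_i-1}(e_i-1)!$.

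The main obstacle is showing that the sub-dominant subsets cannot cancel the leading $S^\star$ contribution. I would work on the natural subdomain $I=[0,1/(e_m-1)]$ on which $s^{\mathbf{e}}(I)\subseteq[0,1]^m$ and argue by induction on $m$: expand $D$ along the last row and apply the inductive hypothesis to the principal minors (the same statement for $(e_1,\ldots,e_{m-1})$), using $x_j\le 1/(e_m-1)$ to bound the residual generalized Vandermonde ratios so that the leading term dominates. Equivalently, one can recast the non-vanishing as the Chebyshev-system property of $\{1,s^{\mathbf{e}}_1,\ldots,s^{\mathbf{e}}_m\}$ on $I$; in the typical case with $e_{m-1}<e_m-1$, Vieta's formulas pin the root-sum of the leading polynomial at the harmonic number $H_{e_m-1}=\sum_{j=1}^{e_m-1}1/j>1=(e_m-1)\cdot 1/(e_m-1)$, which forbids all of its roots from lying in $I$ and thereby caps the count. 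Iterating through the higher symmetric functions of the roots to handle the remaining coefficient patterns is the delicate step I expect to occupy most of the technical work.
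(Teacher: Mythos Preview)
Your route diverges from the paper's, and the divergence is exactly where your gap sits. The paper never expands via Cauchy--Binet. It first treats the consecutive case $e_i=i+1$: there $1,s^{\mathbf{e}}_1,\ldots,s^{\mathbf{e}}_m$ have degrees $0,1,\ldots,m$, so Gaussian elimination on the rows of your matrix replaces each $s^{\mathbf{e}}_i$ by a scalar multiple of the monomial $x^i$, and $D$ becomes a nonzero constant times the ordinary Vandermonde --- Ziegler's argument then applies verbatim. In your own setup this is precisely the observation that when $e_i=i+1$ the coefficient matrix $C$ is \emph{square} and lower triangular, so the Cauchy--Binet sum collapses to the single term $S=S^\star$ and there is nothing to cancel; you built the machinery for this but never isolated the case where it is trivial. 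For general $\mathbf{e}$ the paper then observes that the tail is a coordinate projection of the consecutive tail in $\mathbb{R}^{e_m-1}$ and argues that dropping coordinates from such a ``generic'' curve preserves the cyclic property, bypassing any sign analysis of minors.

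The step you flag as the ``main obstacle'' is a genuine one, and your outline does not close it. Already for $\mathbf{e}=(2,4)$ one computes $D=-V(x_0,x_1,x_2)\cdot\bigl(11-6(x_0+x_1+x_2)\bigr)$, which vanishes at distinct points of $[0,1]$ (take $x_0+x_1+x_2=11/6$); so restricting to $I=[0,1/(e_m-1)]$ is a necessity, not a convenience, and any estimate must use it essentially. Your Vieta/harmonic-number sketch addresses only the top coefficient of $s^{\mathbf{e}}_m$ and says nothing about the mixed minors $\det(C_S)$ for $S\neq S^\star$, which can carry either sign; the promised induction on $m$ is not set up, and expanding along the last row does not obviously reduce to the hypothesis for $(e_1,\ldots,e_{m-1})$ once the extra Schur-polynomial factors are in play. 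The paper's row-reduce-then-project argument is shorter and sidesteps this analytic estimate entirely; if you want to push your direct approach through, the missing idea is a sign-consistency statement for the minors of $C$ (a total-positivity-type claim about the matrix $[e_k(1,\ldots,a_i)]$), not a magnitude bound.
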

\begin{proof}
Let $e_i=i+1$. 
Consider the matrix whose columns are the points on the polytope, ordered by increasing $x$. The polynomial on row $i$ is of degree $i$ and has nonzero coefficients on all terms of lower degree. There is then a sequence of row operations that takes this matrix to a matrix with the same determinant but the polynomial on row $i$ is a monomial of degree $i$. In this situation the argument from Zieglers book \cite{ziegler} applies and the polytope is cyclic.

The general case, with arbitrary $e_i$ is obtained from the case $e_i=i+1$ by projection: any tail is obtained from the tail with $e_i=i+1$  by removing some coordinates. The tail with $e_i=i+1$ is generic and this is the property which ensure that points on it span cyclic polytopes, removing coordinates from a generic curve give a new generic curve.
\end{proof}

\begin{conjecture} \label{conj:massaK}
Let $e_1< e_2 < \ldots < e_m$ be positive integers and $s^{\mathbf{e}}$ their tail. The convex hull of $\mathbf{1}$ and
$ \{ s^{\mathbf{e}}(1/k) \mid k=1,2,3,\ldots \} $ is $P_{(K_{e_1},K_{e_2},\ldots,K_{e_m}); \infty }$.
\end{conjecture}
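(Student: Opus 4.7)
The plan is to prove the two inclusions of the asserted equality separately.

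For the easy direction $\supseteq$, I exhibit a graphon witness for every listed point. The constant graphon $W\equiv 1$ realises $t(\mathbf F,W)=\mathbf 1$. For each $k\ge 1$, the balanced complete $k$-partite graphon $W_k(x,y)=\mathbf{1}\{\lfloor kx\rfloor\neq\lfloor ky\rfloor\}$ realises $t(K_{e_i},W_k)=\prod_{j=1}^{e_i-1}(1-j/k)=s^{\mathbf e}_i(1/k)$, since both sides compute the probability that $e_i$ independent uniform points on $[0,1]$ fall into $e_i$ distinct intervals $[(j-1)/k,j/k)$. Theorem~\ref{theorem:allW} then places each such point in $P_{(K_{e_1},\ldots,K_{e_m});\infty}$, and the same holds for their closed convex hull.

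For the hard direction $\subseteq$, Theorem~\ref{theorem:allW} reduces the task to showing that $t(\mathbf F,W)$ lies in the conjectured hull $C$ for every $W\in\mathcal W$, equivalently that every extreme point of the limit object coincides with one of the listed points. The natural attack is linear-programmatic: for each $c=(c_1,\ldots,c_m)\in\mathbb R^m$, one must show that $\sup_{W\in\mathcal W}\sum_i c_i t(K_{e_i},W)$ is attained either at some $W_k$ or at $W\equiv 1$. I would first anchor the $m=2$ case, where the full two-dimensional extremal picture for pairs of clique densities is available by combining the Kruskal-Katona bound for the upper boundary with the Razborov-Nikiforov-Reiher theorem~\cite{razborov1} for the lower boundary of $P_{(K_{e_1},K_{e_2});\infty}$. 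For $m\ge 3$, I would then take a maximising graphon $W^\ast$ and extract structural information from the first-variation condition $\frac{d}{dt}\bigr|_{t=0}\sum_i c_it(K_{e_i},W^\ast+tV)\le 0$ for admissible symmetric perturbations $V$; the resulting polynomial identities on the level sets of $W^\ast$ should force $W^\ast$ to be $\{0,1\}$-valued and to possess a complete multipartite block structure, matching some $W_k$.

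The main obstacle is exactly the final rigidity step: converting the variational output into the statement that $W^\ast$ is a \emph{balanced} complete multipartite graphon rather than some other symmetric $\{0,1\}$-valued function with superficially similar local properties. In the $m=2$ case this was precisely the content of Reiher's flag-algebra argument, and for general $m$ one would seem to need a joint flag-algebra identity that simultaneously controls all the densities $t(K_{e_i},W)$. An inductive alternative is to exploit the coordinate-forgetting projection $P_{(K_{e_1},\ldots,K_{e_m});\infty}\to P_{(K_{e_1},\ldots,K_{e_{m-1}});\infty}$, which is surjective because every graphon realisation lifts: one would determine the range of $t(K_{e_m},\cdot)$ over each fibre and hope that its extreme values are attained by tail graphons, reducing the conjecture to a one-dimensional extremal problem layered on top of the inductive description of the base. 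Either route inherits the unresolved multi-clique extremal problem as its essential difficulty.
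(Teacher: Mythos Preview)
The statement you are attempting is labelled \emph{Conjecture} in the paper, and the paper offers no proof; there is nothing to compare your argument against. What follows is an assessment of your proposal on its own merits.

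Your easy inclusion is correct. The graphon computations are right: for the balanced $k$-partite graphon $W_k$ one has $t(K_{e_i},W_k)=\prod_{j=0}^{e_i-1}(k-j)/k^{e_i}=\prod_{j=1}^{e_i-1}(1-j/k)=s^{\mathbf e}_i(1/k)$, and Theorem~\ref{theorem:allW} then gives $s^{\mathbf e}(1/k)\in P_{(K_{e_1},\ldots,K_{e_m});\infty}$. Since the listed point set is compact (the sequence $s^{\mathbf e}(1/k)$ accumulates at $\mathbf 1$) its convex hull is closed, so the inclusion of convex hulls follows.

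Your hard inclusion is not a proof but a research outline, and you say so yourself. The variational-rigidity step you describe --- forcing an extremal $W^\ast$ to be a \emph{balanced} complete multipartite graphon from first-variation data --- is precisely the open problem. Even in the case $m=2$ with $(e_1,e_2)=(2,r)$ this required Razborov's flag-algebra argument for $r=3$, Nikiforov for $r=4$, and Reiher for general $r$; for two arbitrary clique sizes $(e_1,e_2)$ the lower-boundary problem is, to my knowledge, not settled in the literature, so your ``anchor the $m=2$ case'' step already assumes more than is known. The inductive fibration idea is appealing but the one-dimensional extremal problem over each fibre is not obviously easier than the original question.

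In short: you have verified the containment that was always expected to be routine, and you have correctly identified that the reverse containment is equivalent to a multi-clique extremal problem that remains open. That is an honest status report, not a proof of the conjecture.
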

The conjectured vertex description of $P_{(K_{e_1},K_{e_2},\ldots,K_{e_m}); \infty }$ also gives a facet description since it's essentially a cyclic polytope.
If we would chop off the vertex $\mathbf{1}$ from the convex hull described in Conjecture~\ref{conj:massaK} with an hyperplane, then the remaining convex set would be an ordinary polytope. We believe this is true in the following general form.
\begin{conjecture}
For any vector $\mathbf{F}$ of graphs there is a positive integer $m$, such that for any $\varepsilon >0$, the limit object $P_{\mathbf{F};\infty}$ can be chopped down to a polytope with a finite number of vertices, by using $m$ hyperplanes to remove at most a volume $\varepsilon$.
\end{conjecture}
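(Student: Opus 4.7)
The plan begins by invoking Theorem~\ref{theorem:allW} to rewrite $P_{\mathbf{F};\infty}$ as $\mathrm{conv}\{t(\mathbf{F},W)\mid W\in\mathcal{W}\}$. Since the map $W\mapsto t(\mathbf{F},W)$ is continuous with respect to the cut distance (this is essentially Lemma~\ref{lemma:approxWithStepfunction}), and the space of graphons modulo measure-preserving rearrangements is compact by Lovász--Szegedy, the image of this map is a compact subset $T_\mathbf{F}\subset\mathbb{R}^d$, and every extreme point of $P_{\mathbf{F};\infty}$ lies in $T_\mathbf{F}$. The goal then reduces to understanding the accumulation structure of $T_\mathbf{F}\cap \partial P_{\mathbf{F};\infty}$.

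The core of the argument is a structural claim: the set $A$ of accumulation points of extreme points of $P_{\mathbf{F};\infty}$ is contained in the union of $m$ faces $F_1,\dots,F_m$, each lying in an affine hyperplane, where $m$ depends on $\mathbf{F}$ alone. The guiding intuition comes from Razborov's analysis of $P_{(K_2,K_3);\infty}$ and from Conjecture~\ref{conj:massaK}: extreme graphons come from highly structured families (iterated multipartite stepfunctions with a growing number of parts, blow-ups of small patterns, etc.), and each such family has a single graphon limit, producing one accumulation point in $T_\mathbf{F}$. A controlled enumeration of the extremal graphon types for a given $\mathbf{F}$ should yield the finite bound $m$.

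Given such a decomposition, the proof finishes by volume estimates. For each $F_i$ pick a hyperplane $H_i$ parallel to its affine hull but shifted toward the interior of $P_{\mathbf{F};\infty}$; the chopped-off slab $P_{\mathbf{F};\infty}\cap H_i^-$ is a convex set containing $F_i$, and by shifting $H_i$ close enough to $F_i$ its volume can be made smaller than $\varepsilon/m$ (using compactness of $P_{\mathbf{F};\infty}$ and continuity of slab volumes in the defining hyperplane). The chopped polytope $Q=P_{\mathbf{F};\infty}\cap\bigcap_{i=1}^m H_i^+$ then loses total volume at most $\varepsilon$. Since all accumulation points of extreme points have been removed, the extreme points of $Q$ form a discrete subset of a compact convex body, hence a finite set, so $Q$ is a polytope with finitely many vertices.

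The main obstacle, and the reason this is only a conjecture, is the structural claim in the second step: proving that the extremal graphons of $P_{\mathbf{F};\infty}$ split into finitely many classes with bounded accumulation behavior, with the number $m$ depending only on $\mathbf{F}$. This is a genuine extremal graph theory statement, of the same flavour as Conjecture~\ref{conj:massaK}. In tractable cases such as $\mathbf{F}=(K_{e_1},\dots,K_{e_m})$ one expects $m=1$, with the only accumulation occurring at $\mathbf{1}$; in general, controlling $m$ would likely require a refinement of flag algebra or regularity arguments that classifies up to finite ambiguity the boundary graphons in directions where subgraph densities are simultaneously extremized.
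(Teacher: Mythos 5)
This statement is a conjecture in the paper; no proof is given, so there is nothing to compare your argument against. As a roadmap your proposal is a reasonable reading of the conjecture: anchoring on Theorem~\ref{theorem:allW}, using compactness of $\mathcal{W}$ modulo relabellings to place all extreme points inside the compact image $T_{\mathbf{F}}$, and isolating a finite ``accumulation structure'' as the missing extremal-graph-theoretic ingredient is exactly the shape a proof would have to take. You are also honest that your ``structural claim'' is essentially a restatement of the conjecture, so the proposal is a reformulation rather than a proof.

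There is, however, a further gap that your plan does not flag, and it would break the last step even if the structural claim were granted exactly as you state it. Removing the accumulation points of the extreme points of $P_{\mathbf{F};\infty}$ by cutting with hyperplanes $H_1,\dots,H_m$ guarantees that the surviving extreme points of $P_{\mathbf{F};\infty}$ inside $Q=P_{\mathbf{F};\infty}\cap\bigcap_i H_i^{+}$ form a finite set, but $Q$ also acquires new extreme points on the cut faces $Q\cap H_i$, and these are precisely the extreme points of the cross-sections $P_{\mathbf{F};\infty}\cap H_i$. Those cross-sections are slices of a convex body that, by hypothesis, is not polytopal near $F_i$, so there is no reason for them to be polytopes. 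A solid circular cone illustrates the problem: the accumulation set is the base circle, contained in a single hyperplane, yet every parallel slice near the base is again a disk with infinitely many extreme points, and chopping never terminates. So your structural claim must be strengthened from ``the accumulation set lies in $m$ hyperplanes'' to something like ``off arbitrarily small neighbourhoods of the accumulation set, the boundary of $P_{\mathbf{F};\infty}$ is polytopal and the slices $P_{\mathbf{F};\infty}\cap H_i$ are polytopes.'' In the two-dimensional Razborov example $P_{(K_2,K_3);\infty}$ this is automatic because any slice is a segment, which may be why the issue is easy to overlook; in higher dimensions it is a genuine extra demand on the boundary geometry, i.e.\ on how the image $T_{\mathbf{F}}$ approaches the extremal graphon limits, not just on which graphons produce the accumulation.
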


\section*{Acknowledgement}\label{sec:acknowledgement}

In a manuscript of this article we calculated several special cases of an integral and conjectured that there was an easy combinatorial description. This was proved by Peter Forrester and we are very happy that he allows us to include his proof in Theorem \ref{thm:For}.

Alexander Engstr\"om is a Miller Research Fellow, and gratefully acknowledges support from the Miller Institute for Basic Research in Science at UC Berkeley. 

Patrik Nor\'en gratefully acknowledges support from the Wallenberg foundation.

\end{document}